\documentclass{birkjour}

\usepackage[utf8]{inputenc}
\usepackage[T1]{fontenc}
\usepackage[english]{babel}
\usepackage{mathtools,amssymb,amsthm}
\mathtoolsset{showonlyrefs}
\usepackage{enumitem}

\usepackage{geometry}
\usepackage[pdftex]{hyperref}

\newtheorem{theorem}{Theorem}[section]
\newtheorem{lemma}[theorem]{Lemma}
\newtheorem{proposition}[theorem]{Proposition}
\newtheorem{corollary}[theorem]{Corollary}
\theoremstyle{definition}
\newtheorem{remark}[theorem]{Remark}

\numberwithin{equation}{section}

\begin{document}

\title[Fixed energy relativistic Keplerian orbits]{Periodic orbits with prescribed negative energy \\for relativistic Keplerian problems}

\author[A.~Boscaggin]{Alberto Boscaggin}

\address{
Dipartimento di Matematica ``Giuseppe Peano''\\ 
Universit\`{a} degli Studi di Torino\\
Via Carlo Alberto 10, 10123 Torino, Italy}

\email{alberto.boscaggin@unito.it}

\author[G.~Feltrin]{Guglielmo Feltrin}

\address{
Dipartimento di Scienze Matematiche, Informatiche e Fisiche\\ 
Universit\`{a} degli Studi di Udine\\
Via delle Scienze 206, 33100 Udine, Italy}

\email{guglielmo.feltrin@uniud.it}

\author[D.~Papini]{Duccio Papini}

\address{
Dipartimento di Scienze e Metodi dell'Ingegneria\\
Universit\`{a} degli Studi di Modena e Reggio Emilia\\
Via Giovanni Amendola 2, 42122 Reggio Emilia, Italy}

\email{duccio.papini@unimore.it}

\thanks{Work written under the auspices of the Grup\-po Na\-zio\-na\-le per l'Anali\-si Ma\-te\-ma\-ti\-ca, la Pro\-ba\-bi\-li\-t\`{a} e le lo\-ro Appli\-ca\-zio\-ni (GNAMPA) of the Isti\-tu\-to Na\-zio\-na\-le di Al\-ta Ma\-te\-ma\-ti\-ca (INdAM). 
\\
\textbf{Preprint -- July 2026}} 

\subjclass{34C25, 70G75, 70H12, 70H40.}

\keywords{Periodic solutions, relativistic Kepler problem, fixed energy problem, Maupertuis principle, min--max methods.}

\date{}

\dedicatory{}

\begin{abstract}
Using a variational approach, we study the existence of periodic solutions with prescribed energy for the relativistic equation
\begin{equation*}
\dfrac{\mathrm{d}}{\mathrm{d}t}\left(\dfrac{m\dot x}{\sqrt{1-|\dot{x}|^{2}/c^{2}}}\right)
= -\alpha \frac{x}{|x|^{3}} + \nabla W(x),
\qquad x\in\mathbb{R}^{N}\setminus\{0\},
\end{equation*}
where $W$ is a lower-order perturbation of the Kepler potential. 
The main difficulty stems from the fact that the Kepler singularity is critical for the associated Maupertuis functional, lying exactly at the boundary between the weak force and strong force regimes. To overcome the resulting lack of compactness, we use a penalization procedure and develop a suitable min--max scheme combined with a blow-up analysis of near-collision critical sequences.
As a consequence, we establish the existence of periodic solutions on prescribed negative energy levels, obtaining non-perturbative results in every dimension $N\geq 2$.
\end{abstract}

\maketitle

\section{Introduction}\label{section-1}

In this paper, we investigate the existence of periodic solutions with prescribed energy for the differential system
\begin{equation}\label{eq-main}
\dfrac{\mathrm{d}}{\mathrm{d}t}\left(\dfrac{m\dot x}{\sqrt{1-|\dot{x}|^{2}/c^{2}}}\right)= \nabla V(x), \qquad x\in\mathbb{R}^{N}\setminus\{0\},
\end{equation}
where $m>0$, $c > 0$, and $V\colon \mathbb{R}^{N}\setminus\{0\} \to \mathbb{R}$, with $N \geq 2$, is a potential of the form
\begin{equation}\label{eq-Vkep}
V(x) = \dfrac{\alpha}{\vert x \vert} + W(x),
\end{equation}
with $\alpha > 0$ and $W$ a lower-order term, for $x \to 0$, with respect to the Kepler potential $\alpha/\vert x \vert$.
Here, and throughout the paper, the energy of a solution $x$ of \eqref{eq-main} is defined as 
\begin{equation}\label{eq-energia}
\mathcal{E}(x) = mc^{2} \left(\dfrac{1}{\sqrt{1-|\dot{x}|^{2}/c^{2}}}-1\right) - V(x),
\end{equation}
according to the Lagrangian structure of \eqref{eq-main}, cf.~\cite[Remark~2.2]{BoFePa-26}.

Equation \eqref{eq-main} arises naturally in relativistic mechanics and electrodynamics; in particular, the unperturbed case $W\equiv0$ gives rise to the so-called relativistic Kepler problem
\begin{equation}\label{eq-kepler}
\dfrac{\mathrm{d}}{\mathrm{d}t}\left(\dfrac{m\dot x}{\sqrt{1-|\dot{x}|^{2}/c^{2}}}\right)
= -\alpha \frac{x}{|x|^{3}}, \qquad x\in\mathbb{R}^{N}\setminus\{0\},
\end{equation}
describing the motion of a relativistic particle in an attractive Coulomb field; see, among others, \cite{AnBa-71,Bo-04,Ji-13,LeMo-PP,MuPa-06} and the references therein. As is well known, this equation displays a richer, though still integrable, dynamics than the classical Kepler problem. In particular, for every energy value $h \in (-mc^2,0)$, equation \eqref{eq-kepler} admits many periodic solutions, including both circular ones (i.e., $|x| \equiv R$ for some $R>0$) and non-circular precessing rosetta-type orbits; see, e.g., \cite{BoDaFe-2021,Bo-04}.

In recent years, the relativistic Kepler problem \eqref{eq-kepler} has attracted increasing attention, as a paradigmatic model for the application of tools of nonlinear analysis and dynamical systems. In particular, using methods from Hamiltonian dynamics and critical point theory, the existence of periodic solutions with prescribed period has been extensively studied for possibly time-dependent equations of the form
\begin{equation*}
\dfrac{\mathrm{d}}{\mathrm{d}t}\left(\dfrac{m\dot x}{\sqrt{1-|\dot{x}|^{2}/c^{2}}}\right)= \nabla_{\!x} V(t,x), \qquad x\in\mathbb{R}^{N}\setminus\{0\},
\end{equation*}
where $V$ exhibits a Keplerian-type behavior. Several results have been obtained, both in perturbative regimes (namely, when $V$ is a small perturbation of the Kepler potential) \cite{BoDaFe-2021,BoFePa-25,BoFePa-26,Ga-19} and in global ones \cite{ArSp-23,Be-24,BoDaPa-24-jde,BoDaPa-24-pisa}.

On the contrary, in the autonomous case $V(t,x) = V(x)$, the search for periodic solutions $x$ with prescribed energy, namely $\mathcal{E}(x) = h$ for some $h \in \mathbb{R}$, has been much less investigated, and the available literature is rather limited.
More precisely, the only results we are aware of are of purely perturbative nature, namely dealing with potentials of the type $V(x) = \alpha/\vert x \vert + \varepsilon W(x)$ with $\varepsilon$ a small parameter. In this setting, the existence of periodic solutions bifurcating from a fixed manifold of periodic solutions of the unperturbed problem ($\varepsilon = 0$) has been proved, in spatial dimension $N = 2$ or $N = 3$,  both when the unperturbed solutions are circular or non-circular \cite{BoDaFe-24ch,BoFePa-25,BoFePa-26}.

The aim of this paper is to prove that fixed energy periodic solutions of \eqref{eq-main} actually exist far beyond the perturbative regime, provided that suitable asymptotic and global conditions on the potential $W$ are assumed. For instance, as a corollary of our main result (see Theorem~\ref{th-allN}) we can prove the following.

\begin{theorem}\label{th-intro}
Let $V\colon \mathbb{R}^{N}\setminus\{0\} \to \mathbb{R}$ be a $\mathcal{C}^{2}$-potential of the type \eqref{eq-Vkep}, where 
\begin{equation*}
|x||W(x)| + |x|^{2}|\nabla W(x)| + |x|^{3} |D^{2}W(x)| \to 0, \quad \text{as $|x|\to 0$.}
\end{equation*}
Furthermore, suppose that, for some $\lambda > 0$,
\begin{equation*}
0 < \lambda W(x) \leq - \langle \nabla W(x), x \rangle + (1-\lambda) \frac{\alpha}{\vert x \vert},  \quad \text{for every $x\in\mathbb{R}^{N}\setminus\{0\}$.}
\end{equation*}
Then, the following holds true:
\begin{itemize}
\item if $N=2$, for every $h\in(-mc^{2},0)$ there exists a periodic solution of \eqref{eq-main} with energy $h$;
\item if $N\geq3$, there exists $h^{*}\in[-mc^{2},0)$ such that for every $h\in(h^{*},0)$ there exists a periodic solution of \eqref{eq-main} with energy $h$.
\end{itemize}
\end{theorem}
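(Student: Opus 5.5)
Since the statement is announced as a corollary of the main result, Theorem~\ref{th-allN}, I will prove it by checking the hypotheses of that theorem. I will not redo the variational argument. Still, it helps to recall the underlying scheme, because it shows which assumptions matter.

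\emph{Variational setting.} For fixed $h\in(-mc^{2},0)$, periodic solutions with $\mathcal{E}(x)=h$ are obtained, after a time reparametrization, as critical points of a relativistic Maupertuis-type functional on $H^{1}$ loops avoiding the origin. The functional has the form
\[
\|\dot u\|_{2}\,\Big(\int \Phi_{h}(V(u))\Big)^{1/2},
\]
with $\Phi_{h}(s)$ essentially given by $(h+s)(h+s+2mc^{2})/c^{2}$, which comes from inverting the energy relation. Near the origin, $V\sim\alpha/|x|$, so $\Phi_{h}(V(u))\sim \alpha^{2}/(c^{2}|u|^{2})$. This quadratic behavior is the critical strong/weak force threshold mentioned in the abstract.

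\emph{Step 1: asymptotic hypotheses at the origin.} The condition
\[
|x||W|+|x|^{2}|\nabla W|+|x|^{3}|D^{2}W|\to0
\]
says exactly that $W$ is a $\mathcal{C}^{2}$ lower-order perturbation of the Kepler potential. The blow-up analysis of near-collision critical sequences needs precisely this: after rescaling $u_{n}\approx\varepsilon_{n}v$, the rescaled potential converges in $\mathcal{C}^{2}_{\mathrm{loc}}$ to the pure relativistic Kepler one. I will check that this hypothesis matches, possibly up to the weaker form used there, the asymptotic assumption in Theorem~\ref{th-allN}.

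\emph{Step 2: the global condition.} I will translate
\[
0<\lambda W\le -\langle\nabla W,x\rangle+(1-\lambda)\alpha/|x|
\]
into the form required by Theorem~\ref{th-allN}. Adding $\lambda\alpha/|x|$ to both sides gives
\[
\lambda V(x)\le -\langle \nabla V(x),x\rangle \quad\text{and}\quad V(x)>\alpha/|x|>0.
\]
This is a global Ambrosetti--Rabinowitz/star-shapedness type inequality on $V$. For negative $h$, it should give three things for the Maupertuis functional:
\begin{itemize}
\item the positivity region $\{V>-h\}$ is bounded and star-shaped;
\item Palais--Smale sequences are bounded;
\item the mountain-pass/linking geometry holds on the relevant class of noncontractible loops.
\end{itemize}
The first point follows since $r\mapsto r^{\lambda}V(r\theta)$ is nonincreasing, so $V$ decays along rays at least like $r^{-\lambda}$. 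Positivity of $W$ keeps $V>0$ and $\Phi_{h}(V)$ well defined.

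\emph{Step 3: dimension dichotomy.} Theorem~\ref{th-allN} should yield the full interval $(-mc^{2},0)$ when $N=2$, using winding-number classes. For $N\ge3$, the min--max level must be compared with the collision threshold, computed from the blow-up limit, i.e.\ from the relativistic Kepler collision orbit. This comparison only holds for $h$ close to $0$, and this is what produces $h^{*}$. I will verify that the global hypothesis above is what guarantees $h^{*}<0$, so the interval $(h^{*},0)$ is nonempty.

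\emph{Main obstacle.} The substance lies in Theorem~\ref{th-allN} itself: excluding collisions at the critical $1/|x|$ singularity via penalization and blow-up, and the level estimate for $N\ge3$. At the level of this corollary, the delicate step is Step~2. Theorem~\ref{th-allN} may phrase its hypotheses through $h$-dependent conditions on $V+h$, in which case one must show that the single inequality above implies them uniformly for every $h<0$. I would first try the monotonicity of $r^{\lambda}V(r\theta)$ to obtain them.
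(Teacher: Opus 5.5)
Your reduction to Theorem~\ref{th-allN} is the right strategy, and Steps~1--2 are correct. The asymptotic condition is literally \ref{hp-V1}, and adding $\lambda\alpha/|x|$ shows that the global inequality is exactly \ref{hp-V2} together with $W>0$, so $V(x)>\alpha/|x|$. However, the proposal stops where the actual content of the statement begins. Theorem~\ref{th-allN} also requires \ref{hp-V3} and the quantitative condition \eqref{estim-N2}, $M_h>4\pi^2\alpha^2$, and you neither state nor check it. Instead you write ``I will verify'' and guess that the missing hypotheses are $h$-dependent conditions on $V+h$, to be obtained from the monotonicity of $r^{\lambda}V(r\theta)$. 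The needed information is not there. Monotonicity only yields lower bounds of order $|x|^{-\lambda}$ near the origin, with $\lambda<1$ necessarily, and these say nothing about a bound $V\ge\beta/|x|$. For the same reason, the $N=2$ claim cannot rest on winding-number classes alone, since \eqref{estim-N2} is required for $N=2$ too. Likewise, $h^{*}<0$ is not a consequence of the Ambrosetti--Rabinowitz-type inequality. Finally, $4\pi^2\alpha^2/c^2$ is not a collision threshold. It is $(\alpha^2/c^2)J(\zeta_{1,r})$, the level of the circular blow-up limits. The restriction $h>h^*$ for $N\ge3$ comes from the min--max lower bound, where only $J\ge 1/4$ is available.

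The missing step is short. The bound $V(x)\ge\alpha/|x|$ that you derived from $W>0$ is \ref{hp-V3} with $\beta=\alpha$ and $\eta=+\infty$. Hence $M_h$ is the vertex value of $p_h$:
\[
M_h=p_h(0)+\frac{\alpha^2(h+mc^2)^2}{4|h|(h+2mc^2)} .
\]
For $N=2$, $p_h(0)=4\pi^2\alpha^2$ and $p_h'(0)=\alpha(h+mc^2)>0$, so $M_h>4\pi^2\alpha^2$ for every $h\in(-mc^2,0)$. For $N\ge3$, $p_h(0)=\alpha^2/4$. Setting $s=h+mc^2\in(0,mc^2)$, condition \eqref{estim-N2} reads $s^2>(16\pi^2-1)(m^2c^4-s^2)$, that is, $h>h^*=-mc^2\bigl(1-\sqrt{16\pi^2-1}/(4\pi)\bigr)\in(-mc^2,0)$. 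This is Corollary~\ref{cor-N2-B} with $\beta=\alpha$. It is the global positivity of $W$ (giving $\eta=+\infty$), not the monotonicity along rays, that makes $M_h\to+\infty$ as $h\to0^-$ and hence $h^*<0$.
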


Let us stress that, in spite of the fact that a more restrictive condition is needed in the case $N \geq 3$, the constant $h^*$ appearing in the statement can be explicitly computed (cf.~Corollary~\ref{cor-N2-B}). The reason of the difference between the planar case $N = 2$ and the higher dimensional one will be clarified below.

The proof of the above result, and of Theorem~\ref{th-allN} more in general, relies on a global variational method, taking advantage of the recently proved fact \cite{BoDaMH-23} that periodic solutions $x$ of the relativistic system \eqref{eq-main} with fixed energy 
$\mathcal{E}(x) = h$ correspond, after a time reparameterization
\begin{equation*}
x(t) = u(s(t)),
\end{equation*}
to critical points of the Maupertuis-type functional
\begin{equation*}
\mathcal{M}(u) = \int_{0}^{1} |\dot{u}(s)|^{2}\,\mathrm{d}s \, \int_{0}^{1} \biggl(Z_{h}(u(s))+\dfrac{h}{c^{2}}(h+2mc^{2})\biggr)\,\mathrm{d}s,
\end{equation*}
where $Z_h \colon \mathbb{R}^{N}\setminus\{0\} \to \mathbb{R}$ is defined as
\begin{equation}\label{def-Zh-intro}
Z_{h}(u) = 2\left(m+\dfrac{h}{c^{2}}\right) V(u) + \dfrac{1}{c^{2}}V(u)^{2}.
\end{equation}
The relativistic nature of the differential system \eqref{eq-main} is thus reflected in a perturbation, for $c \to +\infty$, of the usual Jacobi metric of classical mechanics, see \cite[Chapter~2, Section~4]{AmCo-93}.

An application of such a relativistic Maupertuis principle was given in \cite{BoDaMH-23} for a potential $V$ of the type
\begin{equation}\label{eq-Vsigma}
V(u) = \dfrac{\alpha}{\vert u \vert^\sigma} + W(u),
\end{equation}
with $\sigma > 1$. In this situation, the leading term at the origin of the potential $Z_h$ is proportional to $\vert u \vert^{-\gamma}$
with $\gamma = 2\sigma > 2$. So, on the one hand, the usual strong force argument (see, for instance, \cite[Lemma~5.3]{AmCo-93}) yields
\begin{equation*}
\int_0^1 Z_h(u_n(s)) \,\mathrm{d}s \to +\infty,  \quad \text{if $u_n \overset{H^1}{\rightharpoonup} u$ with $u^{-1}(0) \neq \emptyset$};
\end{equation*}
and, as a consequence,
\begin{equation}\label{intro-cond1}
\mathcal{M}(u_n)\to +\infty,  \quad \text{if $u_n \overset{H^1}{\rightharpoonup} u$ with $u^{-1}(0) \neq \emptyset$ and $u \not\equiv 0$.}
\end{equation}
On the other hand, as carefully explained in \cite[Remark~3.3]{BoDaMH-23}, it holds that
\begin{equation}\label{intro-cond2a}
\lim_{u \to 0} \left( Z_h(u) + \frac12 \langle \nabla Z_h(u),u \rangle \right) = - \infty
\end{equation}
and so
\begin{align}
\mathcal{M}'(u_n)u_n 
&= 2 \int_{0}^{1} |\dot{u}_n|^{2} \int_{0}^{1} \biggl( Z_{h}(u_n)+  \dfrac{1}{2} \langle \nabla Z_{h}(u_n), u_n\rangle +\dfrac{h}{c^{2}}(h+2mc^{2}) \biggr)
< 0,
\\
& \quad \text{ if $u_n \overset{L^\infty}{\to} 0$ and $\dot u_n \not\equiv 0$.}
\label{intro-cond2}
\end{align}
Conditions \eqref{intro-cond1} and \eqref{intro-cond2} have an essential role in controlling the behavior of the functional $\mathcal{M}$ along a sequence $(u_n)$ approaching a singularity; as a consequence, the existence of periodic solutions with any positive energy $h > 0$ can be proved. In particular, in the planar case $N=2$ such solutions can be characterized as global minima on the class of non-contractible loops in the punctured plane $\mathbb{R}^2 \setminus \{0\}$.

On the contrary, in the case of a Keplerian-type potential (that is, $\sigma = 1$ in \eqref{eq-Vsigma}, cf.~\eqref{eq-Vkep}), the problem becomes much more subtle. Indeed, while property \eqref{intro-cond1} remains true, conditions \eqref{intro-cond2a} and \eqref{intro-cond2}  now fail. In particular, since the order of homogeneity of the leading term of $Z_h$ is balanced by the one of the kinetic term $\Vert \dot u_n \Vert_{L^2}$, the behavior of the functional along a sequence $(u_n)$ approaching a singularity becomes less transparent. We stress that this is not a matter of technicality: indeed, differently from the case $\sigma > 1$ in which periodic solutions have positive energy, here periodic solutions have to be found in the energy range $(-mc^2,0)$, as shown by the analysis of the pure Keplerian case $W\equiv 0$.
It is worth observing, moreover, that these solutions cannot be local minima, not even in dimension $N=2$. Indeed, focusing again on the pure Keplerian case, the Maupertuis functional writes as
\begin{equation*}
\mathcal{M}(u) = \frac{1}{c^2}\int_{0}^{1} |\dot{u}|^{2} \int_{0}^{1} \left( \frac{\alpha^2}{\vert u \vert^2} +  \frac{2\alpha (h+mc^2)}{\vert u \vert} + h (h+2mc^2)\right),
\end{equation*}
and so, for any fixed non-constant critical point $\bar{u}$ of $\mathcal{M}$, the function $m_{\bar{u}}(\lambda) \coloneqq \mathcal{M}(\lambda \bar u)$ is a quadratic polynomial
$m_{\bar{u}}(\lambda) = A \lambda^2 + B \lambda + C$
with 
\begin{equation*}
A = \frac{h(h+2mc^2)}{c^2} \int_{0}^{1} |\dot{\bar{u}}|^{2} < 0.
\end{equation*}
Hence, the parabola parametrized by $m_{\bar{u}}$ is downward-opening, showing that $\bar{u}$ is actually a maximum point 
of $\mathcal{M}$ along the line $\lambda \bar{u}$.

Motivated by the above considerations, in the present paper we develop a general min--max approach to deal with relativistic Keplerian-type problems in arbitrary dimension $N \geq 2$. We now outline our strategy, inspired by the paper \cite{Ta-93}, which analyzes singular weak force problems in classical mechanics as limit of a sequence of problems with strong force.
First, for every $\varepsilon \in (0,1)$ we introduce a penalized Maupertuis functional $\mathcal{M}_\varepsilon \colon \Lambda \to \mathbb{R}$ defined as
\begin{equation*}
\mathcal{M}_\varepsilon(u) = \int_{0}^{1} |\dot{u}|^{2} \int_{0}^{1} \left(Z_{h}(u)+\dfrac{h}{c^{2}}(h+2mc^{2})\right) + \varepsilon \int_{0}^{1} \dfrac{1}{|u|^{2}},
\end{equation*}
where $Z_h$ is as in \eqref{def-Zh-intro} and $\Lambda$ is the open subset of $H^1$-periodic function $u \colon \mathbb{R} \to \mathbb{R}^N \setminus \{0\}$, with the further requirement that, when $N=2$, the winding number of $u$ is equal to $1$.
The role of the above penalization term is to ensure that $\mathcal{M}_\varepsilon(u_n) \to +\infty$ even when the weak limit of $u_n$ is the null function $u \equiv 0$, cf.~\eqref{intro-cond1}. As a consequence, also the Palais--Smale condition holds true.

Second, we look for critical points of $\mathcal{M}_\varepsilon$ at the critical level
\begin{equation*}
b_\varepsilon = \inf_{\gamma \in \Gamma} \max_{y \in D} \mathcal{M}_\varepsilon(\gamma(y)),
\end{equation*}
where $D = [0,1] \times \mathbb{S}^{N-2}$ is a cylinder (just to present all the cases in a unified manner, we adopt in the next lines the unusual convention $\mathbb{S}^0 = \{1\}$, cf.~Remark~\ref{casoN=2})
and $\Gamma$ is a suitable set of continuous functions $\gamma \colon D \to \Lambda$ with a fixed value on the bases 
$B = (\{0\} \times \mathbb{S}^{N-2}) \cup  (\{1\} \times \mathbb{S}^{N-2})$ of the cylinder.
Note that in the case $N=2$ the cylinder degenerates into the interval $[0,1] \times \{1\}$, so that the class $\Gamma$ is just a set of paths
with a fixed value on $B = \{0,1\} \times \{1\}$, as in the classical mountain pass scenario. 
One can prove that, for some constant $b_*$,
\begin{equation}\label{stima-intro}
b_\varepsilon \geq b_* > \frac{4 \pi^2 \alpha^2}{c^2},
\end{equation}
which, together with the behavior of the functional $\mathcal{M}_\varepsilon$ on $\gamma(B)$, yield the existence of a critical point $u_\varepsilon$ of $\mathcal{M}_\varepsilon$ at level $b_\varepsilon$. Moreover, since the cylinder $D$ has dimension $N-1$, the Morse index $\mathrm{i}(u_\varepsilon)$ is less than or equal to $N-1$, as well.

Finally, to conclude the proof we show that the limit as $\varepsilon \to 0^+$ provides a collisionless critical point of the unpenalized Maupertuis functional $\mathcal{M} = \mathcal{M}_0$ as a strong $H^1$-limit of the family $\{u_\varepsilon\}$. This is actually the key step of the proof: arguing by contradiction, it is first shown that $u_\varepsilon \to 0$ strongly. Then, the blow-up family $w_\varepsilon = u_\varepsilon / \Vert \dot u_\varepsilon \Vert_{L^2}$ is introduced and, using $Z_h(u) \sim \alpha^2/(c^2|u|^2)$ for $u \to 0$, it is proved that 
\begin{equation*}
\mathcal{M}_\varepsilon(u_\varepsilon) = \frac{\alpha^2}{c^2} J(w_\varepsilon) + o(1),
\end{equation*}
where $J$ is the scale invariant functional
\begin{equation}\label{defJ-intro}
J(w) = \int_0^1 \vert \dot w \vert^2 \int_0^1 \frac{1}{\vert w \vert^2},
\quad \text{as $\varepsilon \to 0^+$,}
\end{equation}
and that $w_\varepsilon$ converges to a critical point $w_0$ of $J$.
The non-trivial critical levels of $J$ are known explicitly and are given by $4\pi^2 k^2$ for $k \in \mathbb{Z} \setminus \{0\}$. 
At this point, from the fact that the winding number of $u_\varepsilon$ is one if $N=2$, and from the Morse index estimate $\mathrm{i}(u_\varepsilon) \leq N-1$ if $N \geq 3$, one proves that it must be $k = 1$, and so $J(w_0) = 4\pi^2$. Hence, the contradiction 
\begin{equation*}
\frac{4 \pi^2 \alpha^2}{c^2} < b_* \leq \mathcal{M}_\varepsilon(u_\varepsilon) = \frac{\alpha^2}{c^2} J(w_\varepsilon) + o(1) = 
\frac{\alpha^2}{c^2} J(w_0) + o(1),
\quad \text{as $\varepsilon \to 0^+$,}
\end{equation*}
is obtained, finally concluding the proof. 

It is worth mentioning that the proof of estimate \eqref{stima-intro} is different if $N=2$ or $N \geq 3$.
Indeed, in the first case one can bound from below the leading term of $\mathcal{M}_\varepsilon$ using a geometric inequality valid for the functional $J$: precisely, $J(u) \geq 4\pi^2 k^2$ for every non-contractible planar loop which winds $k$ times around the origin (implying that the critical points of $J$ are indeed local minima). In the case $N \geq 3$, a similar inequality is not available (indeed, critical points have non-zero Morse index) and the proof of \eqref{stima-intro} actually requires different arguments. This is the reason why the assumptions in the case $N \geq 3$ are more restrictive: that is, $h > h^*$ in Theorem~\ref{th-intro}. In the more general setting of Theorem~\ref{th-allN}, the proof of estimate \eqref{stima-intro} relies on condition \eqref{estim-N2}, which again differs if $N=2$ or $N \geq 3$.

\smallskip

The plan of the paper is the following. In Section~\ref{section-2}, we present the statement of the main result (Theorem~\ref{th-allN}) together with some corollaries. Section~\ref{section-3} is devoted to the relativistic Maupertuis principle and the functional $J$. The proof of the main theorem is given in Section~\ref{section-4}.

\smallskip

\noindent
\textbf{Notation.} Throughout the paper, $H$ stands for the Sobolev space
of $H^1$ and $1$-periodic functions with values in $\mathbb{R}^N$.
For $u\in H$, we denote by
\begin{equation}
[u] = \int_{0}^{1} u(t)\,\mathrm{d}t
\end{equation}
the integral mean value.

\section{Statement of the main result and corollaries}\label{section-2}

Let us introduce the following set of assumptions concerning the potential  $V\colon \mathbb{R}^{N}\setminus\{0\} \to \mathbb{R}$:
\begin{enumerate}[leftmargin=26pt,labelsep=8pt,label=\textup{$(V_{1})$}]
\item There exist $\alpha>0$ and $W\in\mathcal{C}^{2}(\mathbb{R}^{N}\setminus\{0\})$ such that
\begin{equation*}
V(u) = \dfrac{\alpha}{|u|} + W(u), \quad \text{for every $u\in\mathbb{R}^{N}\setminus\{0\}$,}
\end{equation*} 
with
\begin{align}
&|u|W(u)\to 0, \quad \text{as $|u|\to 0$,}
\\
&|u|^{2}\nabla W(u)\to 0, \quad \text{as $|u|\to 0$,}
\\
&|u|^{3} D^{2}W(u)\to 0, \quad \text{as $|u|\to 0$.}
\end{align}
\label{hp-V1}
\end{enumerate}

\begin{enumerate}[leftmargin=26pt,labelsep=8pt,label=\textup{$(V_{2})$}]
\item There exists $\lambda>0$ such that
\begin{equation*}
0 < \lambda V(u) \leq - \langle \nabla V(u), u \rangle, \quad \text{for every $u\in\mathbb{R}^{N}\setminus\{0\}$.}
\end{equation*} 
\label{hp-V2}
\end{enumerate}
Note that, if \ref{hp-V2} holds, one can assume that $\lambda>0$ is as small as desired. In particular we will take $\lambda\in(0,1)$.

\begin{remark}\label{rem-hp-V2}
Assumption \ref{hp-V2} is reminiscent of the classical Ambrosetti--Rabinowitz condition often employed when using variational methods and, as is well known, it implies suitable inequalities concerning the potential $V$ at zero and at infinity. Indeed, setting $f(s)=s^{\lambda}|u|^{\lambda}V(su)$, by \ref{hp-V2} we have
\begin{equation*}
f'(s) = s^{\lambda-1}|u|^{\lambda} \bigl[\langle \nabla V(su), su \rangle + \lambda V(su) \bigr] \leq 0.
\end{equation*}
Then, $f(s)\leq f(1)$ for every $s\geq1$ and $f(s)\geq f(1)$ for every $s\in(0,1]$. Therefore, taking $s=1/|u|$, we deduce respectively that $V(u/|u|) \leq V(u) |u|^{\lambda}$ if $|u|\in(0,1]$ and $V(u/|u|) \geq V(u) |u|^{\lambda}$ if $|u|\geq1$, that is
\begin{align*}
&V(u) \geq \min_{|\xi|=1} V(\xi) |u|^{-\lambda}, \quad \text{for every $u\in\mathbb{R}^{N}$ with $|u|\in(0,1]$,}
\\
&V(u) \leq \max_{|\xi|=1} V(\xi) |u|^{-\lambda}, \quad \text{for every $u\in\mathbb{R}^{N}$ with $|u|\geq1$.}
\end{align*} 
Note that the inequality for $|u|\in(0,1]$ is compatible with \ref{hp-V1} since, as already mentioned above, one can take $\lambda\in(0,1)$. Moreover, the inequality for $|u|\geq1$ provides
\begin{equation}\label{eq-infinity}
V(u)\to 0, \qquad \text{as $|u|\to +\infty$.}
\end{equation}
A natural class of potentials satisfying assumption \ref{hp-V2}, as well as the other conditions, is given by
\begin{equation*}
V(u) = \dfrac{\alpha}{|u|} + \dfrac{\gamma}{|u|^{\delta}} + \widetilde{W}(u), \qquad u\in\mathbb{R}^{N}\setminus\{0\},
\end{equation*} 
with $\gamma > 0$, $\delta \in (0,1)$, and the map $\widetilde{W}$ with compact support in $\mathbb{R}^{N}\setminus\{0\}$ and small enough.
\hfill$\lhd$
\end{remark}

In order to state our main result, we finally need:
\begin{enumerate}[leftmargin=26pt,labelsep=8pt,label=\textup{$(V_{3})$}]
\item There exist $\beta\in(0,\alpha]$ and $\eta\in(0,+\infty]$ such that 
\begin{equation*}
V(u)\geq \dfrac{\beta}{|u|}, \quad \text{for every $u\in\mathbb{R}^{N}\setminus\{0\}$ with $|u|<\eta$.}
\end{equation*} 
\label{hp-V3}
\end{enumerate}
Let us note that, in view of \ref{hp-V1}, the above condition is actually satisfied for every $\beta < \alpha$, provided $\eta$ is chosen small enough: however, in what follows a precise quantitative condition involving both $\beta$ and $\eta$ will be needed. 
Precisely, for every $h\in(-mc^{2},0)$ we define the quadratic function $p_h \colon \mathbb{R} \to \mathbb{R}$ as
\begin{equation*}
p_{h}(\rho) =
\begin{cases}
\, h(h+2mc^{2}) \rho^{2} + \beta (h+mc^{2}) \rho + 4\pi^{2}\beta^{2},
&\text{if $N=2$,}
\vspace{2pt}\\
\, h(h+2mc^{2}) \rho^{2} + \beta (h+mc^{2}) \rho + \dfrac{\beta^{2}}{4},
&\text{if $N\geq3$,}
\end{cases}
\end{equation*} 
and the positive real number
\begin{equation*}
M_h = \sup_{\rho\in(0,\frac{\eta}{2})} p_{h}(\rho).
\end{equation*}
Note that, since $h\in(-mc^{2},0)$, the graph of $p_{h}$ is a downward-opening parabola, with vertex $(\rho_h,p_h(\rho_h))$ belonging to the first quadrant; moreover, $p_{h}(0) > 0$. As a consequence, $M_h$ is actually equal to $p_h(\rho_h)$ if $\rho_h \leq \eta/2$ and to $p_h(\eta/2)$ otherwise.

\smallskip

Our main result reads as follows.

\begin{theorem}\label{th-allN}
Let $N\geq2$.
Let $V\colon \mathbb{R}^{N}\setminus\{0\} \to \mathbb{R}$ satisfy \ref{hp-V1}, \ref{hp-V2}, and \ref{hp-V3}. Let $h \in(-mc^{2},0)$. If
\begin{equation}\label{estim-N2}
M_h > 4 \pi^{2} \alpha^{2},
\end{equation}
then there exists a periodic solution of \eqref{eq-main} with energy $h$.
\end{theorem}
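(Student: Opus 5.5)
We follow the scheme outlined in the Introduction. First we reduce the problem to finding a non-constant critical point $u\in\Lambda$ of $\mathcal{M}=\mathcal{M}_0$ with positive value. Here $\Lambda$ is the set of collisionless $1$-periodic $H^1$ loops, with winding number one when $N=2$. By the relativistic Maupertuis principle of \cite{BoDaMH-23}, a time reparameterization of such a critical point gives a periodic solution of \eqref{eq-main} with energy $h$. Assumption \ref{hp-V2} gives $V>0$, so $Z_h>0$, but the constant $\frac{h}{c^2}(h+2mc^2)$ is negative. We therefore work on the open set where $\int_0^1 (Z_h(u)+\frac{h}{c^2}(h+2mc^2))>0$. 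On this set $\mathcal{M}$ is positive exactly when $u$ is non-constant.

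For $\varepsilon\in(0,1)$ we introduce $\mathcal{M}_\varepsilon$ and prove the Palais--Smale condition at positive levels. Let $(u_n)$ be a Palais--Smale sequence. Boundedness of $\int|\dot u_n|^2$ from above and below comes from testing $\mathcal{M}_\varepsilon'(u_n)$ against $u_n$ and against $u_n-[u_n]$. The Ambrosetti--Rabinowitz type inequality \ref{hp-V2}, through Remark~\ref{rem-hp-V2} and \eqref{eq-infinity}, rules out escape to infinity: if $|[u_n]|\to\infty$ with bounded oscillation, then $Z_h(u_n)\to0$ and the second factor becomes negative. Approach to collisions is ruled out by the strong force effect of the penalization $\varepsilon\int|u|^{-2}$ together with \eqref{intro-cond1}.

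We then build the class $\Gamma$ of maps $\gamma\colon D=[0,1]\times\mathbb{S}^{N-2}\to\Lambda$. On the base $\{0\}\times\mathbb{S}^{N-2}$ we use small circles of radius $\delta$. On the base $\{1\}\times\mathbb{S}^{N-2}$ we use large circles of radius $R$, placed in planes rotated according to $y\in\mathbb{S}^{N-2}$, where $\mathcal{M}_\varepsilon$ is small or negative. A circle of radius $r$ has
\begin{equation*}
\mathcal{M}\approx\frac{4\pi^2}{c^2}\bigl(h(h+2mc^2)r^2+2\alpha(h+mc^2)r+\alpha^2\bigr)+\frac{\varepsilon}{r^2}.
\end{equation*}
For $r=R$ large this is negative. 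For $r=\delta$ small it stays near $4\pi^2\alpha^2/c^2$, up to the controlled penalization.

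The key estimate is \eqref{stima-intro}: $b_\varepsilon\ge b_*>4\pi^2\alpha^2/c^2$ uniformly in $\varepsilon$. Let $\gamma\in\Gamma$. By a connectedness or degree argument, some $\gamma(y)$ has a prescribed "size" $\rho\in(0,\eta/2)$, for instance $\max|u|=2\rho$ or mean radius $\rho$, and stays inside the ball of radius $\eta$.
\begin{itemize}
\item For $N=2$, winding number one and the inequality $J\ge4\pi^2$ give $\int|\dot u|^2\int|u|^{-2}\ge4\pi^2$. Combining this with $V\ge\beta/|u|$ from \ref{hp-V3} and a Jensen/Cauchy--Schwarz bound $\int|\dot u|^2\gtrsim\rho^2$ gives $c^2\mathcal{M}(u)\ge p_h(\rho)$ for the chosen $\rho$.
\item For $N\ge3$, we instead use a linking argument over $\mathbb{S}^{N-2}$: some loop must have $0$ in its "convex hull" at scale $\rho$. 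Wirtinger-type bounds then yield the weaker constant $\beta^2/4$.
\end{itemize}
Optimizing over $\rho$ produces $M_h/c^2$, and hypothesis \eqref{estim-N2} makes this exceed $4\pi^2\alpha^2/c^2$. The deformation lemma then gives a critical point $u_\varepsilon$ at level $b_\varepsilon$. Its Morse index satisfies $\mathrm{i}(u_\varepsilon)\le N-1$ by standard min--max Morse index bounds on an $(N-1)$-dimensional class.

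Finally we pass to the limit $\varepsilon\to0^+$. The levels $b_\varepsilon$ are bounded above by a fixed path, and the bounds above are uniform, so $u_\varepsilon$ is bounded. If its limit is a collisionless $u_0\ne0$, then $H^1$ convergence gives the desired critical point, and \eqref{intro-cond1} excludes collision limits other than $0$. The expected main obstacle is the case $u_\varepsilon\to0$. There we take the blow-up $w_\varepsilon=u_\varepsilon/\|\dot u_\varepsilon\|_{L^2}$. Using \ref{hp-V1}, including the $D^2W$ decay to control second variations, we show that $\mathcal{M}_\varepsilon(u_\varepsilon)=\frac{\alpha^2}{c^2}J(w_\varepsilon)+o(1)$, with the penalization term shown to be negligible or of favourable sign. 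We also show that $w_\varepsilon$ converges strongly to a critical point $w_0$ of $J$, i.e.\ a $k$-fold circle with level $4\pi^2k^2$. Winding number one ($N=2$), or lower semicontinuity of the index together with the fact that the $k$-fold circle has index at least $N$ for $k\ge2$ ($N\ge3$), forces $k=1$. This gives $b_*\le4\pi^2\alpha^2/c^2$, a contradiction.
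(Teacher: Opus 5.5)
Your overall scheme is the paper's: penalized functional, Palais--Smale via $(V_2)$, min--max over $[0,1]\times\mathbb{S}^{N-2}$ with $\mathrm{i}(u_\varepsilon)\le N-1$, blow-up $w_\varepsilon=u_\varepsilon/\mu_\varepsilon$ with $\mu_\varepsilon=\|\dot u_\varepsilon\|_{L^2}$, and exclusion of $|k|\ge2$ by winding number or Morse index. For $N=2$ your lower bound is essentially the paper's argument.

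\textbf{The gap: the min--max class for $N\ge3$.} This topological input is exactly what produces $b_\varepsilon\ge b_*>4\pi^2\alpha^2/c^2$. You leave the boundary family unspecified. The family your circle-energy formula presupposes, circles centred at $0$ in the planes $\mathrm{span}\{(y,0),e_N\}$, does not in general link the origin. For $N=3$, translate each small circle along the normal $(y^\perp,0)$ to distance $L>\rho$, enlarge it to radius $\rho_1$, and translate it back. The translation is orthogonal to the plane of the circle, so $|u|^2=s^2+r^2>0$ throughout and this is an admissible $\gamma$. It never meets $\{\max_t|u-[u]|=\rho,\ |[u]|\le\rho\}$, and along it $\mathcal{M}_\varepsilon$ stays below $4\pi^2\alpha^2/c^2+o(1)$ (take $\rho_0$, then $\varepsilon$, small and $L$ large). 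Correspondingly, $\sigma/|\sigma|$ has degree $0$ for this family. So the min--max level would not exceed $b_*$, and the final contradiction disappears.

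The paper instead takes Tanaka's family $\sigma(q,t)$: unit circles centred at the points $2(q+e_1)$ of an $(N-2)$-sphere through the origin, lying in the planes spanned by $(q,0)$ and $e_N$. It satisfies $|\sigma|\ge1$ and $|\dot\sigma|^2=4\pi^2$, so $\mathcal{M}_0(\rho\sigma(q,\cdot))\to\frac{4\pi^2\alpha^2}{c^2}\int_0^1|\sigma(q,\cdot)|^{-2}\le\frac{4\pi^2\alpha^2}{c^2}<b_*$ as $\rho\to0^+$. Moreover $\sigma/|\sigma|\colon\mathbb{S}^{N-2}\times\mathbb{S}^1\to\mathbb{S}^{N-1}$ has degree $\pm1$. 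From this, Lemma~\ref{lem-intersection-3} (Tanaka's appendix) gives $u=\gamma(\tau,q)$ with $\max_t|u-[u]|=\rho^*$ and $|[u]|\le\rho^*$. Hence $\|\dot u\|_{L^2}\ge\rho^*$, $\|u\|_\infty\le2\rho^*<\eta$, and $J(u)\ge1/4$. Your ``$0$ in the convex hull at scale $\rho$'' is the right target, but without this specific family and lemma it is unsupported.

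\textbf{Two smaller points.} First, you cannot quote the Maupertuis principle of \cite{BoDaMH-23} as it stands. It requires $V(u(s))+h>0$ for all $s$, which for $h<0$ can fail where $\dot u=0$. The paper proves Theorem~\ref{th-Maupertuis-principle}, which needs only $\mathcal{M}(u)>0$ and $V+h+2mc^2>0$. The conserved energy of $\ddot u=\omega^2\nabla Z_h(u)$ gives $|\dot u|^2/(2\omega^2)=c^{-2}(V(u)+h)(V(u)+h+2mc^2)$, hence $V(u)+h\ge0$. The time change $\dot\sigma=\frac{c^2}{\omega\sqrt2}(V(u(\sigma))+h+mc^2)^{-1}$ is then well defined.

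Second, in the blow-up the penalization cannot be of ``favourable sign''. A non-vanishing positive term $\frac{\varepsilon}{\mu_\varepsilon^2}\int_0^1|w_\varepsilon|^{-2}$ would destroy both the contradiction with $b_*$ and the conclusion $J'(w_\varepsilon)\to0$. One must show $\varepsilon/\mu_\varepsilon^2\to0$. The paper does this by testing the equation with $u_\varepsilon$ and using the Kepler cancellation $2Z_h(u)+\langle\nabla Z_h(u),u\rangle=o(|u|^{-2})$ as $u\to0$.
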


An immediate corollary of the above result can be obtained when $N = 2$ and condition $\ref{hp-V3}$ is satisfied with $\beta = \alpha$ (that is, $W$ is non-negative near the origin). Indeed in this case $4\pi^2 \alpha^2 =  4\pi^{2}\beta^{2} = p_h(0) < M_h$ and so condition
\eqref{estim-N2} is trivially satisfied for any $h\in(-mc^{2},0)$ (let us note that, on the contrary, for $N \geq 3$ it holds $p_h(0) = \beta^2/4$ and this argument does not work). Precisely, the following result holds true. 
 
\begin{corollary}\label{cor-N2-A}
Let $N=2$. Let $V\colon \mathbb{R}^{N}\setminus\{0\} \to \mathbb{R}$ satisfy \ref{hp-V1} and \ref{hp-V2}. Moreover, assume that 
there exists $\eta\in(0,+\infty]$ such that 
\begin{equation*}
W(u)\geq 0, \quad \text{for every $u\in\mathbb{R}^{N}\setminus\{0\}$ with $|u|<\eta$.}
\end{equation*} 
Then, for every $h\in(-mc^{2},0)$, there exists a periodic solution of \eqref{eq-main} with energy $h$.
\end{corollary}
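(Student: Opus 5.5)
The plan is to obtain Corollary~\ref{cor-N2-A} directly from Theorem~\ref{th-allN}. To do so, we check \ref{hp-V3} with $\beta=\alpha$ and then verify \eqref{estim-N2} for every $h\in(-mc^{2},0)$.

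\emph{Step 1: checking \ref{hp-V3}.} Since $V(u)=\alpha/|u|+W(u)$ and $W(u)\geq 0$ for $|u|<\eta$, we get $V(u)\geq \alpha/|u|$ for $|u|<\eta$. Hence \ref{hp-V3} holds with $\beta=\alpha\in(0,\alpha]$ and with the same $\eta\in(0,+\infty]$. Assumptions \ref{hp-V1} and \ref{hp-V2} are part of the hypotheses.

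\emph{Step 2: checking \eqref{estim-N2}.} Fix $h\in(-mc^{2},0)$. Since $N=2$ and $\beta=\alpha$,
\begin{equation*}
p_h(\rho)=h(h+2mc^{2})\rho^{2}+\alpha(h+mc^{2})\rho+4\pi^{2}\alpha^{2},
\qquad p_h(0)=4\pi^{2}\alpha^{2}.
\end{equation*}
The coefficient $\alpha(h+mc^{2})$ of the linear term is strictly positive, so $p_h'(0)>0$. Therefore $p_h(\rho)>p_h(0)$ for all sufficiently small $\rho>0$. Since $\eta>0$, such values of $\rho$ lie in $(0,\eta/2)$, which gives
\begin{equation*}
M_h=\sup_{\rho\in(0,\frac{\eta}{2})}p_h(\rho)>p_h(0)=4\pi^{2}\alpha^{2}.
\end{equation*}
If one wants an explicit witness, take $\rho=\min\{\rho_h,\eta/4\}$, where $\rho_h=\alpha(h+mc^{2})/(2|h|(h+2mc^{2}))>0$ is the vertex of the parabola. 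On $[0,\rho_h]$ the function $p_h$ is strictly increasing, so $p_h(\rho)>p_h(0)$. The case $\eta=+\infty$ needs no separate treatment, since $(0,\eta/2)=(0,+\infty)$.

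\emph{Step 3: conclusion.} All hypotheses of Theorem~\ref{th-allN} are satisfied, so for every $h\in(-mc^{2},0)$ there is a periodic solution of \eqref{eq-main} with energy $h$. No step here is a real obstacle. The only point needing care is that the inequality $M_h>4\pi^{2}\alpha^{2}$ is strict, and this is exactly what the positivity of the linear coefficient $\alpha(h+mc^{2})$, i.e.\ $h>-mc^{2}$, provides.
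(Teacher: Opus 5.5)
Your proposal is correct and follows the paper's argument: \ref{hp-V3} holds with $\beta=\alpha$, and then $p_h(0)=4\pi^{2}\alpha^{2}<M_h$ because the parabola increases on $[0,\rho_h]$ with $\rho_h>0$, so Theorem~\ref{th-allN} applies. Your explicit witness $\rho=\min\{\rho_h,\eta/4\}$ simply makes precise the paper's remark that the vertex lies in the first quadrant.
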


A further corollary of Theorem~\ref{th-allN}, valid in the general case $N \geq 2$, can be given when in assumption \ref{hp-V3} we can take $\eta = +\infty$. In this situation, as mentioned above, $M_h$ is nothing but the height of the vertex of the parabola described by $p_h$, and we can prove that assumption \eqref{estim-N2} holds true when an explicit bound on $-h$ is imposed. Precisely, rephrasing assumption \ref{hp-V3} (with $\eta = +\infty$) in terms of $W$, the following result holds true.

\begin{corollary}\label{cor-N2-B}
Let $N\geq2$.
Let $V\colon \mathbb{R}^{N}\setminus\{0\} \to \mathbb{R}$ satisfy \ref{hp-V1} and \ref{hp-V2}. Moreover, assume that 
there exists $\beta\in(0,\alpha]$ such that 
\begin{equation*}
W(u)\geq -\dfrac{\alpha-\beta}{|u|}, \quad \text{for every $u\in\mathbb{R}^{N}\setminus\{0\}$.}
\end{equation*} 
Let
\begin{equation*}
h^{*} = - mc^{2} \left(1-\dfrac{\Gamma}{\sqrt{\Gamma^{2}+1}}\right), 
\qquad \text{with \,} \Gamma= 
\begin{cases}
\, \displaystyle 4\pi \sqrt{\dfrac{\alpha^2}{\beta^2}-1},
&\text{if $N=2$,}
\vspace{4pt}\\
\, \displaystyle 4\pi \sqrt{\dfrac{\alpha^2}{\beta^2}-\dfrac{1}{16\pi^2}},
&\text{if $N\geq3$.}
\end{cases}
\end{equation*}
Then, for every $h\in(h^{*},0)$, there exists a periodic solution of \eqref{eq-main} with energy $h$.
\end{corollary}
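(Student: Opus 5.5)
The plan is to deduce the statement from Theorem~\ref{th-allN}, so what has to be shown is that \eqref{estim-N2} holds for every $h\in(h^{*},0)$. First, the hypothesis on $W$ gives \ref{hp-V3} with the given $\beta$ and $\eta=+\infty$. Indeed,
\begin{equation*}
V(u)=\frac{\alpha}{|u|}+W(u)\geq \frac{\alpha}{|u|}-\frac{\alpha-\beta}{|u|}=\frac{\beta}{|u|}
\end{equation*}
for every $u\neq0$. Since $\eta/2=+\infty$, we have $M_h=\sup_{\rho>0}p_h(\rho)$. As observed before Theorem~\ref{th-allN}, the vertex abscissa $\rho_h$ of the downward parabola is positive, so $M_h=p_h(\rho_h)$.

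Next I compute the vertex value. Write $p_h(\rho)=A\rho^2+B\rho+C$ with
\begin{equation*}
A=h(h+2mc^2)<0,\qquad B=\beta(h+mc^2),
\end{equation*}
and $C=4\pi^2\beta^2$ if $N=2$, $C=\beta^2/4$ if $N\geq3$. Then $M_h=C-B^2/(4A)$. Set $E=h+mc^2\in(0,mc^2)$. Then $-A=(mc^2)^2-E^2>0$, and
\begin{equation*}
M_h=C+\frac{\beta^2E^2}{4\bigl((mc^2)^2-E^2\bigr)}.
\end{equation*}

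Condition \eqref{estim-N2} therefore reads
\begin{equation*}
\frac{E^2}{(mc^2)^2-E^2}>\frac{4(4\pi^2\alpha^2-C)}{\beta^2}.
\end{equation*}
The right-hand side equals $16\pi^2(\alpha^2/\beta^2-1)$ if $N=2$ and $16\pi^2\alpha^2/\beta^2-1$ if $N\geq3$. In both cases this is exactly $\Gamma^2$; it is nonnegative because $\beta\leq\alpha$, so $\Gamma$ is well defined. The map $E\mapsto E^2/((mc^2)^2-E^2)$ is increasing on $(0,mc^2)$, so the inequality is equivalent to $E^2(1+\Gamma^2)>\Gamma^2(mc^2)^2$, that is,
\begin{equation*}
E>mc^2\,\frac{\Gamma}{\sqrt{\Gamma^2+1}}.
\end{equation*}
In terms of $h$ this says $h>h^{*}$.

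Hence, for every $h\in(h^{*},0)$, condition \eqref{estim-N2} holds, and Theorem~\ref{th-allN} yields the periodic solution with energy $h$. There is no real obstacle here; the only points to watch are the sign bookkeeping in $-A=(mc^2)^2-E^2$ and the degenerate case $\Gamma=0$. That case occurs only for $N=2$ with $\beta=\alpha$, where $h^*=-mc^2$, consistently with Corollary~\ref{cor-N2-A}.
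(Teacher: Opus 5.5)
Your proposal is correct and follows essentially the same route as the paper: you verify \ref{hp-V3} with $\eta=+\infty$, identify $M_h$ with the vertex value $p_h(\rho_h)$, and solve the resulting quadratic inequality in $h$. Your substitution $E=h+mc^2$, which gives $-h(h+2mc^2)=(mc^2)^2-E^2$, is a tidier way of doing the paper's expansion in $h$, and it also covers $N=2$ and $N\geq3$ at once, whereas the paper writes out only the planar case.
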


In particular, taking $\alpha = \beta$ and rephrasing assumption \ref{hp-V2} in terms of $W$, we obtain Theorem~\ref{th-intro}.

\begin{proof}[Proof of Corollary~\ref{cor-N2-B}]
As already observed, by the lower bound on $W$, condition \ref{hp-V3} holds (with $\eta=+\infty$).
To apply Theorem~\ref{th-allN} and conclude, we have only to verify that \eqref{estim-N2} is satisfied for $h\in(h^{*},0)$.

Let $N=2$ (the case $N\geq3$ is analogous and therefore omitted). Recalling that the vertex of the parabola is 
\begin{equation*}
(\rho_h,p_h(\rho_h)) = \left(-\dfrac{\beta (h+mc^{2})}{2h(h+2mc^{2})}, -\dfrac{\beta^{2} (h+mc^{2})^{2}}{4h(h+2mc^{2})} + 4\pi^{2}\beta^{2}\right),
\end{equation*}
we impose that $M_h = p_h(\rho_h)>4\pi^{2} \alpha^{2}$ and obtain
\begin{equation*}
\beta^2 (h + mc^2)^2 + 16 \pi^2 (\alpha^2 - \beta^2) h(h + 2 mc^2) > 0.
\end{equation*}
Expanding with respect to $h$ gives
\begin{equation*}
(\beta^2 + 16 \pi^2 (\alpha^2 - \beta^2)) h^2 
+ 2 mc^2 (\beta^2+ 16 \pi^2 (\alpha^2 - \beta^2)) h 
+ \beta^2 m^2 c^4 > 0.
\end{equation*}
Recalling that $h\in(-mc^{2},0)$, we have
\begin{equation*}
- mc^2 \left(1- \sqrt{\dfrac{16 \pi^2 (\alpha^2 - \beta^2)}{\beta^2 + 16 \pi^2 (\alpha^2 - \beta^2)}} \right) < h < 0,
\end{equation*}
 and the thesis follows.
\end{proof}

\begin{remark}\label{rem-Mercury}
In the case $\alpha=\beta$ and $N=3$ in Corollary~\ref{cor-N2-B}, the ratio $h^*/(mc^{2})$ is approximatively $-3.17 \cdot 10^{-3}$, a number which could appear quite small.
However, for planets of the Solar System the ratio between their relativistic energy and $mc^{2}$ ranges between $-10^{-8}$ (Mercury) and $-10^{-10}$ (Neptune). This suggests that Corollary~\ref{cor-N2-B} can be applied in physically relevant regimes.
\hfill$\lhd$
\end{remark}

\section{Preliminaries}\label{section-3}

In this section, we collect some preliminary materials that will be crucially used in the proof of our main results.
Specifically, in Section~\ref{section-3.1} we provide the statement of the relativistic Maupertuis principle needed for our purposes, while in Section~\ref{section-3.2} we collect some important properties of the auxiliary functional $J$ defined in \eqref{defJ-intro}.

\subsection{A relativistic Maupertuis principle}\label{section-3.1}

Let us consider the relativistic equation
\begin{equation}\label{eq-mainM}
\dfrac{\mathrm{d}}{\mathrm{d}t}\left(\dfrac{m\dot x}{\sqrt{1-|\dot{x}|^{2}/c^{2}}}\right)= \nabla V(x), \qquad x \in \Omega,
\end{equation}
where $\Omega$ is an open set of $\mathbb{R}^{N}$, with $N \geq 2$, and $V \colon \Omega \to \mathbb{R}$ is a function of class $\mathcal{C}^1$. As it is easy to check, solutions of the above equation satisfy an energy conservation principle: precisely, if $x$ is a solution of \eqref{eq-mainM}, then
\begin{equation}\label{eq-energiaM}
\mathcal{E}(x) \coloneqq mc^{2} \left(\dfrac{1}{\sqrt{1-|\dot{x}(t)|^{2}/c^{2}}}-1\right) - V(x(t)) \equiv h,
\end{equation}
for some $h \in \mathbb{R}$. Incidentally, let us note that, if $x$ is a solution of \eqref{eq-mainM} with energy $h$, 
then
\begin{equation}\label{eq-Hillregion}
V(x(t)) + h = mc^{2} \left(\dfrac{1}{\sqrt{1-|\dot{x}(t)|^{2}/c^{2}}}-1\right) \geq 0, \quad \text{ for every $t$.}
\end{equation}
The set $\{x \in \Omega \colon  V(x) + h \geq 0 \}$ is usually known, in classical mechanics, as the Hill's region.

The aim of this section is to prove a suitable version of the Maupertuis principle, that is, proving that (periodic) solutions of equation 
\eqref{eq-mainM} with energy equal to $h$ can be obtained as time-reparameterization of critical points of a suitable functional
on a space of (periodic) functions.
To this end, given $h \in \mathbb{R}$, we define the functional $\mathcal{M}\colon H_\Omega \to\mathbb{R}$ as 
\begin{equation*}
\mathcal{M}(u) = \int_{0}^{1} |\dot{u}(s)|^{2}\,\mathrm{d}s \, \int_{0}^{1} \biggl(Z_{h}(u(s))+\dfrac{h}{c^{2}}(h+2mc^{2})\biggr)\,\mathrm{d}s,
\end{equation*}
where $H_\Omega = \{u \in H \colon u(s) \in \Omega, \text{ for every $s \in [0,1]$}\}$ and
\begin{equation}\label{def-Zh}
Z_{h}(u) = 2\left(m+\dfrac{h}{c^{2}}\right) V(u) + \dfrac{1}{c^{2}}V(u)^{2}.
\end{equation}
For further convenience, we observe that $Z_h$ can also be written as
\begin{equation*}
Z_{h}(u) = \dfrac{1}{c^{2}} \Bigl[V(u)^{2} + 2(h+mc^{2}) V(u) \Bigr],
\end{equation*}
and thus
\begin{equation}\label{eq-Z-h}
Z_{h}(u)+\dfrac{h}{c^{2}}(h+2mc^{2}) = \dfrac{1}{c^{2}} \Bigl[V(u)^{2} + 2(h+mc^{2}) V(u) + h(h+2mc^{2})\Bigr].
\end{equation}
A relativistic Maupertuis principle based on the use of the functional $\mathcal{M}$ was provided in \cite{BoDaMH-23}, assuming that $u$ is a non-constant critical point of $\mathcal{M}$ which satisfies
\begin{equation}\label{hp-Maup-0}
V(u(s)) + h >0, \quad \text{for every $s \in [0,1]$.}
\end{equation}
Such an assumption is trivially satisfied when $V$ is positive and $h$ is non-negative; on the contrary, it may fail in the application considered in this paper: indeed, while $V$ is still positive, $h \in (-mc^2,0)$ and so critical points of $\mathcal{M}$ with $V(u(s)) + h \geq 0$ may exists, cf \eqref{eq-Hillregion}. To address this, we provide a slight generalization of the result in \cite{BoDaMH-23}, together with a more direct proof, assuming instead of \eqref{hp-Maup-0} the weaker assumption \eqref{hp-Maup-pr} below. 

\begin{theorem}\label{th-Maupertuis-principle}
Let $u\in H_\Omega$ be a critical point of $\mathcal{M}$ such that
\begin{equation}\label{hp-Maup-pr}
\mathcal{M}(u) > 0 \quad \text{ and } \quad V(u(s)) + h+2mc^{2}>0, \quad \text{for every $s \in [0,1]$.}
\end{equation}
Then, there exist $T>0$ and an increasing diffeomorphism $\sigma\colon [0,T]\to[0,1]$ such that the function $x(t)=u(\sigma(t))$ 
can be extended by $T$-periodicity to a solution of \eqref{eq-mainM} with energy $h$, that is $\mathcal{E}(x)=h$ (cf.~\eqref{eq-energiaM}).
\end{theorem}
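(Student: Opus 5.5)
Set $K=\int_0^1|\dot u|^2$ and $F(v)=Z_h(v)+\frac{h}{c^2}(h+2mc^2)=\frac{1}{c^2}(V+h)(V+h+2mc^2)$, so that $A:=\int_0^1 F(u)$ and $\mathcal{M}(u)=KA$. The plan is to turn the Euler--Lagrange equation of $\mathcal{M}$ into an energy identity, use it to define a time change, and then check the relativistic equation directly.

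\textbf{Step 1: Euler--Lagrange equation.} Because $\mathcal{M}(u)>0$, we have $K>0$ and $A>0$, so $u$ is nonconstant. The critical point condition reads
\begin{equation}
-2A\ddot u + K\nabla Z_h(u)=0
\end{equation}
in the weak sense. A bootstrap gives $u\in\mathcal{C}^2$. Taking the inner product with $\dot u$ shows that $A|\dot u|^2-K Z_h(u)$ is constant. Integrating over $[0,1]$ identifies the constant, and we get
\begin{equation}
A|\dot u(s)|^2=K\,F(u(s)), \qquad s\in[0,1].
\end{equation}
Hence $F(u(s))\ge 0$ everywhere. Combined with $V+h+2mc^2>0$, this gives $V(u(s))+h\ge0$.

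\textbf{Step 2: the main obstacle.} We must show that $\dot u$ never vanishes, i.e.\ that $V(u(s))+h>0$ for every $s$. Suppose $F(u(s_0))=0$. Then $\dot u(s_0)=0$. If also $\nabla Z_h(u(s_0))=0$, uniqueness for the second-order ODE would force $u$ to be constant, contradicting $K>0$. So $\nabla Z_h(u(s_0))\neq0$, and the trajectory touches the boundary of the Hill region and turns back there. This is the brake-orbit scenario.

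\begin{itemize}
\item First approach: show that such points cannot occur by parametrizing by arc length near $s_0$.
\item Fallback: define the time change through $\mathrm{d}t=\sqrt{K/A}\,g(u)^{-1}\,\mathrm{d}s$ with a weight $g$ such that $\mathrm{d}t/\mathrm{d}s$ stays finite and positive even where $F=0$. The natural choice is $g(u)=(V+h+2mc^2)/(c^2 \cdot (\text{Lorentz factor}))$, with no square root of $F$.
\end{itemize}

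The point of the fallback is that if $\mathrm{d}t/\mathrm{d}s$ is built only from $V+h+2mc^2>0$, then $\sigma$ is automatically a diffeomorphism and $\dot x=0$ exactly where $\dot u=0$. This is consistent with the relativistic energy identity: energy $h$ gives speed $0$ precisely when $V+h=0$.

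\textbf{Step 3: choice of the time change.} Guided by the target equation, impose
\begin{equation}
\frac{mc^2}{\sqrt{1-|\dot x|^2/c^2}}=V(x)+h+mc^2=:\Phi .
\end{equation}
This is equivalent to
\begin{equation}
|\dot x|^2=\frac{c^2(\Phi^2-m^2c^4)}{\Phi^2}=\frac{c^4 F}{\Phi^2},
\end{equation}
where we used $\Phi^2-m^2c^4=c^2F$. Since $|\dot u|^2=(K/A)F$, it suffices to take
\begin{equation}
\frac{\mathrm{d}t}{\mathrm{d}s}=\sqrt{K/A}\,\frac{\Phi(u(s))}{c^2}.
\end{equation}
This is positive because $\Phi>\tfrac12\bigl(V+h+2mc^2\bigr)+\tfrac12(V+h)\ge \tfrac12(V+h+2mc^2)>0$. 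It has no square-root singularity, so Step 2 needs no separate argument. Set $T=\sqrt{K/A}\int_0^1\Phi(u)/c^2\,\mathrm{d}s$, let $\sigma$ be the inverse of $s\mapsto t$, and extend $x$ by $T$-periodicity. The energy identity holds by construction.

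\textbf{Step 4: verify the equation.} The relativistic momentum is
\begin{equation}
\frac{m\dot x}{\sqrt{1-|\dot x|^2/c^2}}=\frac{\Phi}{c^2}\,\dot x=\frac{\Phi}{c^2}\,\frac{\mathrm{d}s}{\mathrm{d}t}\,u'=\sqrt{A/K}\,u',
\end{equation}
where $u'$ denotes the derivative in $s$. Differentiating in $t$ and using Step 1,
\begin{equation}
\sqrt{A/K}\,u''\,\frac{\mathrm{d}s}{\mathrm{d}t}=\frac{K}{2A}\sqrt{\frac AK}\,\nabla Z_h(u)\,\frac{c^2}{\sqrt{K/A}\,\Phi}=\frac{c^2\nabla Z_h}{2\Phi}.
\end{equation}
Finally, $\nabla Z_h=\frac{2}{c^2}(V+h+mc^2)\nabla V=\frac{2\Phi}{c^2}\nabla V$, so the right-hand side equals $\nabla V(x)$, as required.

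With this choice of time change the turning-point issue disappears, and the remaining work is the routine regularity bootstrap and the computations above.
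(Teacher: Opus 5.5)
Your proof is correct and follows the paper's argument: the same Euler--Lagrange equation $\ddot u=\omega^2\nabla Z_h(u)$ with $\omega^2=K/(2A)$, the same integrated energy identity giving $F(u)\ge 0$ and hence $V(u)+h\ge 0$, the same time change $\mathrm{d}t/\mathrm{d}s=\sqrt{K/A}\,(V+h+mc^2)/c^2$, and the same check through the momentum $\sqrt{A/K}\,u'$. Your Step~2 is not needed, as you noticed yourself, and in your positivity chain the first relation should be an equality, $\Phi=\tfrac12(V+h+2mc^2)+\tfrac12(V+h)$, not a strict inequality; the paper simply uses $\Phi\ge mc^2$.
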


\begin{proof}
Let $u\in H_\Omega$ be a critical point of $\mathcal{M}$.
We first observe that, since $\mathcal{M}(u) > 0$ by \eqref{hp-Maup-pr}, $u$ is non-constant and 
$$
\int_{0}^{1} \biggl(Z_{h}(u(s))+\dfrac{h}{c^{2}}(h+2mc^{2})\biggr)\,\mathrm{d}s > 0.
$$
As a consequence, the constant 
\begin{equation}\label{def-omega}
\omega = \left(\dfrac{1}{2}\int_{0}^{1} |\dot{u}|^{2} \bigg{/} \int_{0}^{1}\left(Z_{h}(u)+\dfrac{h}{c^{2}}(h+2mc^{2})\right)\right)^{\!\frac{1}{2}}
\end{equation}
is well-defined and strictly positive. Then, from
\begin{equation*}
\mathcal{M}'(u) v
= 2 \int_{0}^{1} \langle\dot{u}, \dot{v}\rangle \int_{0}^{1}\left(Z_{h}(u)+\dfrac{h}{c^{2}}(h+2mc^{2})\right) 
+ \int_{0}^{1} |\dot{u}|^{2} \int_{0}^{1}\langle\nabla Z_{h}(u),v\rangle = 0,
\end{equation*}
for every $v\in H$, we obtain that $u$ is a $1$-periodic solution of the differential equation
\begin{equation}\label{eq-u-Maupertuis}
\ddot{u} = \omega^{2} \, \nabla Z_{h}(u).
\end{equation}
Moreover, by energy conservation we know that
\[
\dfrac{1}{2} |\dot{u}(s)|^{2} - \omega^{2} Z_{h}(u(s)) \equiv k, \quad \text{ for some $k \in \mathbb{R}$,}
\]
so that, integrating the last identity on $[0,1]$ and recalling the definition \eqref{def-omega},
\begin{equation}\label{eq-energy-u-Maupertuis}
\dfrac{1}{2} |\dot{u}(s)|^{2} - \omega^{2} Z_{h}(u(s)) = \omega^{2} \dfrac{h}{c^{2}}(h+2mc^{2}), 
\quad \text{for every $s\in[0,1]$.}
\end{equation}
This implies that
\begin{equation*}
Z_{h}(u(s))+\dfrac{h}{c^{2}}(h+2mc^{2}) = \dfrac{1}{2\omega^{2}} |\dot{u}(s)|^{2} \geq 0, 
\quad \text{for every $s\in[0,1]$,}
\end{equation*}
and since, by \eqref{eq-Z-h},
\begin{equation}\label{eq-Zh-product}
Z_{h}(u(s))+\dfrac{h}{c^{2}}(h+2mc^{2}) = \dfrac{1}{c^{2}}(V(u(s))+h)(V(u(s))+h+2mc^{2}) ,
\end{equation}
from the second condition in hypothesis \eqref{hp-Maup-pr} we finally conclude that 
\begin{equation}\label{eq-Hill2}
V(u(s))+h\geq0, \quad \text{for every $s \in [0,1]$.}
\end{equation}
Let us now consider the initial value problem
\begin{equation}\label{eq-dotsigma-Maupertuis}
\dot{\sigma} = \dfrac{c^{2}}{\omega \sqrt{2}} \dfrac{1}{V(u(\sigma))+h+mc^{2}}, \quad \sigma(0)=0.
\end{equation}
By \eqref{eq-Hill2}, $V(u(\sigma))+h+mc^{2}\geq mc^{2}$ for every $\sigma$ and so $0<\dot{\sigma}<(m\omega\sqrt{2})^{-1}$. As a consequence, the solution $\sigma$ of \eqref{eq-dotsigma-Maupertuis} exists for every $t\in[0,+\infty)$ and is strictly increasing. Moreover, $\lim_{t\to+\infty}\sigma(t)=+\infty$, since there are no equilibria.
Therefore, there exists $T>0$ such that $\sigma(T)=1$. 
Let 
\begin{equation*}
x(t)=u(\sigma(t)), \quad \text{for every $t \in [0,T]$.}
\end{equation*}
We aim to verify that $x$ can be extended by $T$-periodicity to a solution of \eqref{eq-mainM} with energy $h$.
Of course, $x(0) = x(T)$; moreover $\dot x(0) = \dot x(T)$, since $\dot \sigma(0) = \dot\sigma(T)$ by \eqref{eq-dotsigma-Maupertuis}. 
Furthermore
\begin{equation}\label{eq-dotx-Maupertuis}
\dot{x} = \dot{u}(\sigma) \dot{\sigma} = \dot{u}(\sigma)\dfrac{c^{2}}{\omega \sqrt{2}} \dfrac{1}{V(u(\sigma))+h+mc^{2}},
\end{equation}
and so from \eqref{eq-energy-u-Maupertuis} and \eqref{eq-Zh-product} we obtain
\begin{equation}\label{eq-denom-Maupertuis}
\begin{aligned}
1-\dfrac{|\dot{x}|^{2}}{c^{2}} 
&= 1- \dfrac{1}{2\omega^{2}}|\dot{u}(\sigma)|^{2} \dfrac{c^{2}}{(V(u(\sigma))+h+mc^{2})^{2}}
= 1- \dfrac{c^{2} Z_{h}(u)+h(h+2mc^{2})}{(V(u(\sigma))+h+mc^{2})^{2}}
\\
&= 1- \dfrac{(V(u(\sigma))+h)(V(u(\sigma))+h+2mc^{2})}{(V(u(\sigma))+h+mc^{2})^{2}} = \dfrac{m^{2}c^{4}}{(V(u(\sigma))+h+mc^{2})^{2}}.
\end{aligned}
\end{equation}
Then,
\begin{equation*}
\mathcal{E}(x) = mc^{2} \left(\dfrac{1}{\sqrt{1-|\dot{x}|^{2}/c^{2}}}-1\right)=V(x) + h,
\end{equation*}
proving \eqref{eq-energiaM}. 
Next, we verify that $x$ solves \eqref{eq-mainM}. Using \eqref{eq-dotx-Maupertuis} and \eqref{eq-denom-Maupertuis}, we have
\begin{equation*}
\dfrac{m\dot{x}}{\sqrt{1-|\dot{x}|^{2}/c^{2}}} = \dfrac{1}{\omega\sqrt{2}}\dot{u}(\sigma)
\end{equation*}
and so, from \eqref{eq-u-Maupertuis} and \eqref{eq-dotsigma-Maupertuis} we deduce
\begin{align*}
\dfrac{\mathrm{d}}{\mathrm{d}t}\left(\dfrac{m\dot{x}}{\sqrt{1-|\dot{x}|^{2}/c^{2}}}\right)
&= \dfrac{1}{\omega\sqrt{2}} \ddot{u}(\sigma) \dot{\sigma}
\\
&= \dfrac{1}{\omega\sqrt{2}} \omega^{2} \nabla Z_{h}(u(\sigma))  \dfrac{c^{2}}{\omega \sqrt{2}} \dfrac{1}{V(u(\sigma))+h+mc^{2}}
=\nabla V(x).
\end{align*}
Summing up, we have proved that, on the interval $[0,T]$, the function $x$ satisfies both \eqref{eq-mainM} and \eqref{eq-energiaM}; moreover, $x(0) = x(T)$ and $\dot x(0) = \dot x(T)$. This implies that $x$ can be extended as a $T$-periodic solution in the usual sense.
\end{proof}

\subsection{The auxiliary functional $J$}\label{section-3.2}

Let us consider the functional
\begin{equation}\label{def-J}
J(u) = \int_{0}^{1} |\dot{u}(t)|^{2} \,\mathrm{d}t \, \int_{0}^{1} \dfrac{1}{|u(t)|^{2}} \,\mathrm{d}t,
\end{equation}
defined on the open set
\begin{equation*}
\mathcal{O}^N = \bigl{\{} u \in H  \colon \text{$u(t) \neq 0$, for every $t$} \bigr{\}}.
\end{equation*}
Note that, for $N = 2$, the open set $\mathcal{O}^2$ can be naturally decomposed into its connected components as
\begin{equation*}
\mathcal{O}^2 = \bigcup_{k \in \mathbb{Z}} O^2_k, \qquad 
\mathcal{O}^2_k = \bigl{\{} u \in \mathcal{O}^2  \colon \mathrm{rot}(u)=k \bigr{\}},
\end{equation*}
where $\mathrm{rot}(u)$ is the clockwise winding number of the planar path
$u\colon [0,1] \to \mathbb{R}^2 \setminus \{0\}$.
A first easy property is then immediately obtained in the planar case.

\begin{proposition}\label{prop-stima-omotopia}
For every $u \in \mathcal{O}^2_k$, it holds that
\begin{equation*}
J(u) \geq 4\pi^2 k^2.
\end{equation*}
\end{proposition}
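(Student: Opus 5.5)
The plan is to write $u$ in polar form and use Cauchy--Schwarz. Since $u \in \mathcal{O}^2_k$ is continuous and never vanishes on $[0,1]$, it admits a continuous lifting
\begin{equation*}
u(t) = r(t)\,(\cos\theta(t), \sin\theta(t)),
\end{equation*}
with $r = |u| > 0$. Because $u \in H^1$ and $r$ is bounded away from zero, both $r$ and $\theta$ belong to $H^1(0,1)$. The winding number condition gives
\begin{equation*}
|\theta(1)-\theta(0)| = 2\pi |k|,
\end{equation*}
where the clockwise orientation convention only affects the sign. Almost everywhere we have
\begin{equation*}
|\dot u|^2 = \dot r^2 + r^2 \dot\theta^2 \geq r^2 \dot\theta^2 .
\end{equation*}

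The key step is the Cauchy--Schwarz inequality applied to $r|\dot\theta|$ and $1/r$:
\begin{equation*}
2\pi|k| = \left|\int_0^1 \dot\theta\right| \leq \int_0^1 r|\dot\theta| \cdot \frac{1}{r} \leq \left(\int_0^1 r^2\dot\theta^2\right)^{1/2}\left(\int_0^1 \frac{1}{r^2}\right)^{1/2}.
\end{equation*}
Squaring and using $\int_0^1 r^2\dot\theta^2 \leq \int_0^1 |\dot u|^2$ gives
\begin{equation*}
4\pi^2k^2 \leq \int_0^1 |\dot u|^2 \int_0^1 \frac{1}{|u|^2} = J(u).
\end{equation*}
For $k=0$ the inequality is trivial, since $J \geq 0$.

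The only point needing care, and it is minor, is justifying the $H^1$ regularity of the lifting. One can get it by writing
\begin{equation*}
\dot\theta = \frac{u_1\dot u_2 - u_2 \dot u_1}{|u|^2},
\end{equation*}
which lies in $L^2$ since $|u|$ is bounded below on the compact interval. One also checks that $\int_0^1 \dot\theta$ equals $2\pi$ times the (signed) winding number.

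Alternatively, one could avoid polar coordinates altogether. Identify $\mathbb{R}^2$ with $\mathbb{C}$ and use the formula
\begin{equation*}
2\pi k = \pm\,\mathrm{Im}\int_0^1 \frac{\dot u}{u}.
\end{equation*}
Combined with $|\dot u/u| = |\dot u|/|u|$, Cauchy--Schwarz then gives the same bound directly.
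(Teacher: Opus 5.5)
Your proof is correct and follows essentially the same route as the paper: a polar lifting $u=\rho e^{-i\theta}$, Cauchy--Schwarz applied to $\rho|\dot\theta|\cdot\rho^{-1}$, and the bound $\rho^2\dot\theta^2\le|\dot u|^2$. The paper reduces to $k\ge 0$ by a time inversion where you use absolute values. You also justify the $H^1$ regularity of the lifting, which the paper takes for granted.
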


\begin{proof}
Let us observe at first that, up to a time-inversion, we can assume $k \geq 0$. Moreover, let us write $u(t) = \rho(t) e^{-i\theta(t)}$, with $\rho$ and $\theta$ continuous, and $\rho>0$.
Then $\vert \dot u(t) \vert^2 = \dot \rho(t)^2 + \rho(t)^2  \dot\theta(t)^2$ and so
\begin{align*}
2\pi k & = \int_0^1 \dot\theta(t)  \,\mathrm{d}t \leq \int_0^1 \frac{\rho(t)\vert \dot\theta(t) \vert}{\rho(t)} \,\mathrm{d}t  
\leq \left(\int_0^1 \frac{\mathrm{d}t }{\rho(t)^2}\right)^{\!\frac{1}{2}} \left(\int_0^1 \rho(t)^2 \dot\theta(t)^2 \mathrm{d}t \right)^{\!\frac{1}{2}} \\
& \leq \left(\int_0^1 \frac{\mathrm{d}t }{\vert u(t) \vert^2} \right)^{\!\frac{1}{2}} \left( \int_0^1 \vert \dot u(t)\vert^2 \mathrm{d}t \right)^{\!\frac{1}{2}},
\end{align*}
whence the conclusion.
\end{proof}

A second key property of the functional $J$ is that it is $0$-positively homogeneous for all $N\geq2$, namely
\begin{equation*}
J(\lambda u ) = J(u), \quad \text{for every $u \in H$ and $\lambda > 0$.}
\end{equation*}
On the one hand, this degeneracy of the functional prevents the use of variational methods to find critical points. On the other hand, it has a dynamical counterpart which makes the analysis much easier. Indeed, let us first recall that, according to the usual Maupertuis principle of classical mechanics, (non-constant) critical points of $J$ correspond after a linear time-reparameterizations to zero-energy periodic solutions of the equation
\begin{equation*}
\ddot z = \frac{z}{\vert z \vert^4}.
\end{equation*}
As is well known (see, for instance, \cite[Chapter~1, Section~2.b]{AmCo-93}) solutions of this problem are circular and have constant angular speed. Accordingly, a very explicit description of the set of critical points can be provided: precisely (cf.~\cite[Lemma~2.6 and Lemma~2.7]{Ta-93}), the following result holds true.

\begin{proposition}\label{prop-J-2}
Let $u \in \mathcal{O}^N$ (with $N\geq 2$) be a critical point of $J$ with $J(u) > 0$ (equivalently, $u$ is non-constant). Then, there exist $k \in \mathbb{Z} \setminus \{0\}$, $r > 0$, and $e_1,e_2 \in \mathbb{R}^N$ with $|e_1| = |e_2| = 1$ and $\langle e_1, e_2 \rangle = 0$ such that
\begin{equation*}
u(t) = \zeta_{k,r}(t) \coloneqq r \left( \cos(2\pi k t) e_1 + \sin(2\pi k t) e_2 \right).
\end{equation*}
Moreover, $J(u) = 4 \pi^2 k^2$ and
\begin{itemize}
\item if $N=2$, then $u$ is a local minimum of $J$ (so, its Morse index is equal to zero),
\item if $N \geq 3$, then the Morse index of $u$ is greater than or equal to $(N-2)(2\vert k \vert - 1)$.
\end{itemize}
\end{proposition}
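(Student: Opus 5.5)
The approach is to go through the Maupertuis principle of classical mechanics and then compute the second variation directly at the circular solutions.

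\emph{Identifying the critical points.} Let $u\in\mathcal{O}^N$ be a critical point with $J(u)>0$. Then $u$ is non-constant, and differentiating gives, for every $v\in H$,
\begin{equation*}
2\int_0^1\langle \dot u,\dot v\rangle \int_0^1 |u|^{-2} - \int_0^1|\dot u|^2\int_0^1 \frac{2\langle u,v\rangle}{|u|^4}=0 .
\end{equation*}
Hence $u$ solves $\ddot u=-\omega^2 u/|u|^4$, with $\omega^2=\int|\dot u|^2/\int|u|^{-2}>0$. Energy conservation gives $\tfrac12|\dot u|^2-\tfrac{\omega^2}{2}|u|^{-2}\equiv k_0$. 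Integrating over $[0,1]$ and using the definition of $\omega$ shows $k_0=0$, so $|\dot u|=\omega/|u|$.

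Because the force is central, the motion lies in a plane spanned by orthonormal $e_1,e_2$. If the angular momentum vanishes, the motion is radial and cannot be periodic, since $\dot\rho=\pm\omega/\rho$ never changes sign. So we may write $u=\rho e^{i\theta}$ in that plane with $\rho^2\dot\theta=L\neq0$.

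The radial equation comes from the zero-energy identity $\dot\rho^2+L^2/\rho^2=\omega^2/\rho^2$. Differentiating and comparing with $\ddot\rho-\rho\dot\theta^2=-\omega^2/\rho^3$, one finds $\rho\ddot\rho+\dot\rho^2=0$, that is, $\tfrac{d^2}{dt^2}\rho^2=0$. Since $\rho^2$ is periodic it must be constant, so $\rho\equiv r$. Then $\dot\theta$ is constant, and $1$-periodicity forces $\dot\theta=2\pi k$ with $k\in\mathbb{Z}\setminus\{0\}$. Finally, $J(\zeta_{k,r})=4\pi^2k^2r^2\cdot r^{-2}=4\pi^2k^2$.

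\emph{Index statements.} For $N=2$ the local minimality follows at once: $\mathcal{O}^2_k$ is open, Proposition~\ref{prop-stima-omotopia} gives $J\ge 4\pi^2k^2$ on it, and $u\in\mathcal{O}^2_k$ attains this value.

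For $N\ge3$ I would compute $J''(u)$ on the variations $v(t)=\varphi(t)e$, where $e\perp e_1,e_2$ (there are $N-2$ independent such directions) and $\varphi$ is a scalar $1$-periodic function. For these $v$ we have $\langle u,v\rangle=0$, and $|u+sv|^{-2}=(r^2+s^2\varphi^2)^{-1}$. Expanding $J(u+sv)$ to second order in $s$ gives, up to a positive factor,
\begin{equation*}
Q(\varphi)=r^{-2}\int_0^1\dot\varphi^2-4\pi^2k^2r^{-2}\int_0^1\varphi^2 .
\end{equation*}
The cross term between $\int\langle\dot u,\dot v\rangle$ and $\int\langle u,v\rangle|u|^{-4}$ vanishes because both factors are zero. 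Thus $Q$ is negative on the Fourier modes $1,\cos 2\pi jt,\sin 2\pi jt$ with $j<|k|$, which is a space of dimension $2|k|-1$. Tensoring with the $N-2$ normal directions produces a negative definite subspace of dimension $(N-2)(2|k|-1)$, and so the Morse index is at least this number.

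The main care goes into the second-order expansion: one must check that the second derivative of $\int|u+sv|^{-2}$ for normal variations is $-2r^{-4}\int\varphi^2$, and that no mixed terms survive.
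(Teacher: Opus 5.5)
Your proof is correct and follows the route the paper indicates. Critical points of $J$ are zero-energy orbits of the attractive $|z|^{-2}$ central force, hence circular, and planar local minimality follows from Proposition~\ref{prop-stima-omotopia} on the open set $\mathcal{O}^2_k$. Where the paper cites Tanaka's Lemmas~2.6--2.7 for the explicit form and the index bound, you supply the details yourself: your normal-variation computation $\tfrac12 J''(\zeta_{k,r})[v,v]=r^{-2}\int_0^1\bigl(|\dot v|^2-4\pi^2k^2|v|^2\bigr)$ for $v\perp e_1,e_2$ splits over the $N-2$ normal directions and correctly yields the $(N-2)(2|k|-1)$-dimensional negative subspace.
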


Note that in \cite{Ta-93} the local minimality of the critical points in the planar case is not stated. However, it follows easily from Proposition~\ref{prop-stima-omotopia}, since $\zeta_{k,r} \in \mathcal{O}^2_k$ and $J(\zeta_{k,r}) = 4\pi^2 k^2$.

\section{Proof of Theorem~\ref{th-allN}}\label{section-4}

As already described in the Introduction, the proof of Theorem~\ref{th-allN} is based on a min--max argument for a penalized Maupertuis functional together with a limit procedure.
This section is devoted to this proof and, more precisely, is organized as follows. In Section~\ref{section-4.1}, we first describe the min--max class. In Section~\ref{section-4.2}, we introduce the $\varepsilon$-penalized functional $\mathcal{M}_{\varepsilon}$ and prove a strong force type property, as well as the validity of the Palais--Smale condition. Then, in Section~\ref{section-4.2b}, we set up the min--max argument to establish the existence of a family $\{u_\varepsilon\}$ of critical points, together with suitable information on the level and the Morse index. Finally, in Section~\ref{section-4.3}, we use a blow-up argument to show that, in the limit $\varepsilon \to 0^+$, one obtains critical points of the Maupertuis functional $\mathcal{M} = \mathcal{M}_0$.

\subsection{The min--max class}\label{section-4.1}

Recalling the definition of the sets $\mathcal{O}^N$ given in Section~\ref{section-3.2}, we set
\begin{equation}\label{def-Lambda}
\Lambda
=
\begin{cases}
\, \displaystyle \mathcal{O}^2_1,
&\text{if $N=2$,}
\\
\, \displaystyle \mathcal{O}^N,
&\text{if $N\geq3$.}
\end{cases}
\end{equation} 
Let us now assume $N=2$ and define the function $\sigma \colon [0,1]\to\mathbb{R}^{2}$ as
\begin{equation*}
\sigma(t) = (-\cos(2 \pi t), \sin(2\pi t)).
\end{equation*}
Note that
\begin{equation}\label{norm-sigma2}
|\sigma(t)|^{2} = 1, 
\quad\text{for every $t\in[0,1]$,}
\end{equation}
and, moreover, $\sigma \in \Lambda$; for further convenience, we also observe that
\begin{equation*}
|\dot{\sigma}(t)|^{2} = 4\pi^{2}, 
\quad\text{for every $t\in[0,1]$.}
\end{equation*}
Now, for every $0 < \rho_{0} < \rho_{1}$ we define
\begin{equation}\label{def-Gamma2}
\Gamma_{\rho_{0},\rho_{1}} = \bigl{\{} \gamma\in\mathcal{C}([0,1],\Lambda) \colon
\gamma(0)=\rho_{0}\sigma, \; \gamma(1)=\rho_{1}\sigma
\bigr{\}}.
\end{equation}

The crucial result needed in the sequel is the following elementary intersection-type property. 

\begin{lemma}\label{lem-intersection-2}
Let $N=2$. For every $\gamma\in\Gamma_{\rho_{0},\rho_{1}}$ and for every $\rho\in(\rho_{0},\rho_{1})$ there exists $\tau\in[0,1]$ such that
\begin{equation*}
\max_{t\in[0,1]} |\gamma(\tau)(t) -  [\gamma(\tau)] | = \rho, \qquad \lvert[\gamma(\tau)]\rvert\leq\rho.
\end{equation*}
\end{lemma}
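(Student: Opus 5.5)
We will prove the lemma by a connectedness argument along the path $\tau \mapsto \gamma(\tau)$. For $u \in \Lambda$ set
\begin{equation*}
R(u) = \max_{t\in[0,1]} |u(t) - [u]|,
\end{equation*}
which is continuous on $H$, since $H$ embeds continuously in $\mathcal{C}$ and the mean value $[\,\cdot\,]$ is continuous. At the endpoints we compute directly: $[\rho_i\sigma]=0$ and $R(\rho_i\sigma)=\rho_i$ by \eqref{norm-sigma2}, for $i=0,1$. Hence $\tau\mapsto R(\gamma(\tau))$ takes the value $\rho_0<\rho$ at $\tau=0$ and $\rho_1>\rho$ at $\tau=1$.

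We then define
\begin{equation*}
\tau^* = \inf\bigl\{\tau \in [0,1] \colon R(\gamma(\tau)) \geq \rho\bigr\},
\end{equation*}
or, more simply, we take any $\tau$ with $R(\gamma(\tau))=\rho$ given by the intermediate value theorem. This yields the first identity in the statement. The lemma, however, claims only existence of some $\tau$ with both properties, so it suffices to show that every $\tau$ with $R(\gamma(\tau))=\rho$ automatically satisfies $|[\gamma(\tau)]|\le\rho$.

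This is where the winding number constraint, built into $\Lambda=\mathcal{O}^2_1$, enters. Let $u=\gamma(\tau)$, so $u$ winds once around the origin and never vanishes, and suppose by contradiction that $|[u]|>\rho$. Every point satisfies $|u(t)-[u]|\le R(u)=\rho<|[u]|$, so the loop $u$ lies in the closed disc of radius $\rho$ centered at $[u]$. This disc is convex and does not contain the origin. The loop is therefore contractible in $\mathbb{R}^2\setminus\{0\}$, via the straight-line homotopy to the constant $[u]$, and so $\mathrm{rot}(u)=0$. This contradicts $u\in\mathcal{O}^2_1$, and we conclude $|[u]|\le\rho$.

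The only mildly delicate step is the last one: we must check that the homotopy $s\mapsto (1-s)u+s[u]$ stays in the disc, which follows from convexity, and hence avoids $0$. Beyond that, everything is routine continuity.
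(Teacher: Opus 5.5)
Your proof is correct and follows the paper's argument: the paper also argues by contradiction that a loop with $\max_t|\gamma(\tau)(t)-[\gamma(\tau)]|=\rho$ and $|[\gamma(\tau)]|>\rho$ lies in a closed disc avoiding the origin, so it has winding number $0$, contradicting $\Lambda=\mathcal{O}^2_1$. You additionally spell out the intermediate value step, namely continuity of $\tau\mapsto R(\gamma(\tau))$ with $R(\gamma(0))=\rho_0<\rho<\rho_1=R(\gamma(1))$, which the paper leaves implicit.
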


\begin{proof}
Let $\gamma\in\Gamma_{\rho_{0},\rho_{1}}$. By contradiction we assume that for $\theta\in[0,1]$ it holds that
\begin{equation*}
\max_{t\in[0,1]} |\gamma(\theta)(t) -  [\gamma(\theta)] | = \rho, \qquad \lvert[\gamma(\theta)]\rvert>\rho.
\end{equation*}
Then, the support of the path $\gamma(\theta)$ is contained in the closed ball $\overline{B}_{\rho}([\gamma(\theta)])\subseteq\mathbb{R}^{2}\setminus\{0\}$. Hence $\mathrm{rot}(\gamma(\theta))=0$ and thus $\gamma(\theta)\notin \Lambda = \mathcal{O}^2_1$, a contradiction.
\end{proof}

We now deal with the more delicate case $N\geq3$. Here, we define the function $\sigma \colon\mathbb{S}^{N-2}\times[0,1]\to\mathbb{R}^{N}$ as
\begin{equation}\label{def-sigma}
\sigma(q,t) = \bigl((2+\cos(2\pi t))q_{1}+2,  (2+\cos(2\pi t))q_{2}, \ldots, (2+\cos(2\pi t))q_{N-1}, \sin(2\pi t)\bigr).
\end{equation}
Observe that
\begin{equation}\label{norm-sigma}
|\sigma(q,t)|^{2} = 1 + 4(1+q_{1})(\cos(2\pi t)+2) \geq 1, 
\quad\text{for every $q\in\mathbb{S}^{N-2}$ and $t\in[0,1]$,}
\end{equation}
and so $\sigma(q,\cdot) \in \Lambda$ for every $q\in\mathbb{S}^{N-2}$. Moreover, 
\begin{equation}\label{norm-dot-sigma}
|\dot{\sigma}(q,t)|^{2} = 4\pi^{2}, 
\quad\text{for every $q\in\mathbb{S}^{N-2}$ and $t\in[0,1]$.}
\end{equation}
Now, for every $0 < \rho_{0} < \rho_{1}$ we define
\begin{equation}\label{def-GammaN}
\Gamma_{\rho_{0},\rho_{1}} = \bigl{\{} \gamma\in\mathcal{C}([0,1]\times\mathbb{S}^{N-2},\Lambda) \colon
\gamma(0,q)=\rho_{0}\sigma(q,\cdot), \; \gamma(1,q)=\rho_{1}\sigma(q,\cdot)
\bigr{\}}.
\end{equation}
The intersection-type property corresponding to the one in Lemma~\ref{lem-intersection-2} reads as follows.

\begin{lemma}\label{lem-intersection-3}
Let $N\geq3$. For every $\gamma\in\Gamma_{\rho_{0},\rho_{1}}$ and for every $\rho\in(\rho_{0},\rho_{1})$ there exists $(\tau,q)\in[0,1]\times\mathbb{S}^{N-2}$ such that
\begin{equation*}
\max_{t\in[0,1]} |\gamma(\tau,q)(t) -  [\gamma(\tau,q)] | = \rho, \qquad \lvert[\gamma(\tau,q)]\rvert\leq\rho.
\end{equation*}
\end{lemma}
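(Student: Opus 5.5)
The plan is a contradiction argument via a topological degree computed on $D\times\mathbb{S}^1$, where $D=[0,1]\times\mathbb{S}^{N-2}$. For $u\in\Lambda$ write $r(u)=\max_t|u(t)-[u]|$ and $m(u)=[u]$; both depend continuously on $u\in H$, since $H\hookrightarrow\mathcal{C}$.

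\emph{Step 1: the bases.} For $q\in\mathbb{S}^{N-2}$ the definition \eqref{def-sigma} gives
\begin{equation*}
[\sigma(q,\cdot)]=(2q_1+2,2q_2,\ldots,2q_{N-1},0), \qquad \sigma(q,t)-[\sigma(q,\cdot)]=(\cos(2\pi t)\,q,\sin(2\pi t)).
\end{equation*}
Hence $r(\rho_i\sigma(q,\cdot))=\rho_i$ for $i=0,1$. The base loops sweep a torus of tube radius $\rho_i$ whose core is the sphere of centres $\rho_i[\sigma(q,\cdot)]$. This core passes through the origin (at $q=-e_1$), so the origin lies inside the solid torus. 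Consequently the map
\begin{equation*}
(q,t)\mapsto \rho_i\sigma(q,t)/|\rho_i\sigma(q,t)|,\qquad \mathbb{S}^{N-2}\times\mathbb{S}^1\to\mathbb{S}^{N-1},
\end{equation*}
has degree $\pm1$; this is the linking of the torus with $0$. Also $\tau\mapsto r(\gamma(\tau,q))$ goes from $\rho_0<\rho$ to $\rho_1>\rho$, so the level set $S=\{(\tau,q)\colon r(\gamma(\tau,q))=\rho\}$ separates the two bases of the cylinder.

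\emph{Step 2: the rescaled map.} Suppose, by contradiction, that $|m(\gamma(\tau,q))|>\rho$ at every point of $S$. I would build a continuous map $G\colon D\times\mathbb{S}^1\to\mathbb{R}^N$ by radially rescaling each loop about its mean:
\begin{equation*}
G(\tau,q,t)=m+\frac{\min\{r,\rho\}}{r}\bigl(\gamma(\tau,q)(t)-m\bigr), \qquad m=m(\gamma(\tau,q)),\ r=r(\gamma(\tau,q)).
\end{equation*}
Where $r\le\rho$ one has $G=\gamma(\tau,q)$, which avoids $0$ because $\gamma$ takes values in $\Lambda$. Where $r>\rho$ the loop $G(\tau,q,\cdot)$ lies in $\overline B_\rho(m)$, and on $S$ this ball misses the origin by the contradiction hypothesis. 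The aim is a region of $D$, bounded by $S$ and the base $\{0\}\times\mathbb{S}^{N-2}$, on which $G/|G|$ is defined and yields a homotopy in $\mathbb{R}^N\setminus\{0\}$ relating the base torus of Step 1 (degree $\pm1$) to the loops over $S$.

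\emph{Step 3: the contradiction.} Over $S$ every loop $G(\tau,q,\cdot)$ lies in a ball $\overline B_\rho(m)$ not containing $0$. Straight-line contraction of each loop to its mean inside this ball, followed by normalization, therefore factors the map over $S\times\mathbb{S}^1$ through $S\times\{\mathrm{pt}\}$. It should thus have degree $0$ on every fundamental cycle, contradicting the degree $\pm1$ inherited from the base.

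The main obstacle is Step 3 made rigorous. $S$ need not be a manifold, so I would replace it by a regular sublevel $\{r<\rho'\}$, with $\rho'$ close to $\rho$, after smoothing $r\circ\gamma$ (or work with Čech/Alexander duality). I would also have to make sure that the contraction lemma is applied on a hypersurface that still carries the fundamental class of the base. I would first try to argue with the Brouwer degree of $G$ on the closure of the component of $\{r<\rho'\}$ adjacent to $\tau=0$, using $\rho'\downarrow\rho$ and compactness.
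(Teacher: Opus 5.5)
Your argument is correct, and it is a more direct, self-contained route than the one in the paper. The paper does not prove the lemma. It invokes \cite[Appendix]{Ta-93}, where the same topological input ($|\sigma(q,t)-[\sigma(q,\cdot)]|=1$ and $\deg(\sigma/|\sigma|)=\pm1$) only gives the weaker bound $|[\gamma(\tau,q)]|\le 3\rho$. It then asserts, without details, that $3$ can be replaced by any constant $>1$, and concludes by compactness. Your contradiction hypothesis says exactly that every loop over the separating set $S$ lies in a ball $\overline B_\rho(m)$ missing the origin. The straight-line homotopy to the mean therefore kills the degree there, and you get the sharp constant $1$ in one step; compactness is needed only to produce a margin. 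What the paper's route buys is brevity, at the price of an external argument with a non-sharp constant plus an unproved improvement. What yours costs is some differential-topology bookkeeping, and the fix you sketch does handle it. Since $|m|-r$ is continuous and positive on the compact set $S$, there is $\delta>0$ with $|m|>r$ on $\{|r\circ\gamma-\rho|\le\delta\}$. Smooth $r\circ\gamma$ to some $\tilde f$ within $\delta/2$, and take a regular value $\rho'$ of $\tilde f$ within $\delta/2$ of $\rho$ (with $\delta<\min\{\rho-\rho_0,\rho_1-\rho\}$). Then $W=\{\tilde f\le\rho'\}$ is a compact manifold with boundary $B_0\sqcup\Sigma$, and $\Sigma$ lies in the margin region. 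The map $\gamma/|\gamma|$ is defined on all of $W\times\mathbb{S}^1$, so its degrees on the boundary pieces, with induced orientations, add up to zero. On $B_0\times\mathbb{S}^1$ the degree is $\pm1$. On $\Sigma\times\mathbb{S}^1$ it is $0$, because the map is homotopic to $m/|m|$, which factors through the $(N-2)$-dimensional $\Sigma$. This is the contradiction.

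One simplification: the rescaled map $G$ in Step 2 plays no role and can be dropped. Since $\gamma$ takes values in $\Lambda$, $\gamma(\tau,q)(t)\neq 0$ everywhere on $D\times\mathbb{S}^1$. Moreover, on the region you actually use, $r\le\rho$ (up to the margin), so $G=\gamma$ there anyway.
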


The proof of this result is given \cite[Appendix]{Ta-93}, based on the fact that 
$\vert \sigma(q,t) - [\sigma(q,\cdot)]\vert =1 $ for every $q$ and $t$ and on the
crucial topological information that the map
\begin{equation*}
\mathbb{S}^1 \times\mathbb{S}^{N-2} \cong [0,1] \times\mathbb{S}^{N-2}\ni (q,t) \mapsto \frac{\sigma(q,t)}{\vert \sigma(q,t) \vert} \in \mathbb{S}^{N-1}
\end{equation*}
has degree (in the sense of the topological degree for maps between compact manifolds, see \cite{Hi-76}) equal to $\pm 1$ (depending on the orientation). Notice however that \cite[Proposition~1.4]{Ta-93} proves the weaker conclusion that 
$\lvert[\gamma(\tau,q)]\rvert\leq3\rho$. Actually, it is not difficult to check that the number $3$ can be replaced by any other number strictly greater than $1$. Finally, by compactness, the result stated as Lemma~\ref{lem-intersection-3} above follows. Since the arguments in \cite[Appendix]{Ta-93} are already quite long and should be basically repeated here, for briefness we avoid the details.

\begin{remark}\label{casoN=2}
It can be useful to note that the case $N=2$ can be, in some sense, recovered by the higher-dimensional one. Indeed, one can consider the definition of $\sigma$ given in \eqref{def-sigma} with $N=2$ (so that $\mathbb{S}^{N-2} = \{\pm 1\}$) and $q_1 =-1$ thus obtaining the function $(-\cos(2 \pi t), \sin(2\pi t))$, which is nothing but $\sigma$ in the planar case. In fact, properties \eqref{norm-sigma} for $q_1 = -1$ reduces to \eqref{norm-sigma2}.
Finally, from a topological point of view, the fact that the function $\sigma$ in the planar case has winding number equal to $1$ corresponds to the property that the degree of the map $\sigma/\vert \sigma \vert \colon \mathbb{S}^1 \times\mathbb{S}^{N-2} \to \mathbb{S}^{N-1}$ is equal to $\pm 1$.
\hfill$\lhd$
\end{remark}

\subsection{The functional $\mathcal{M}_{\varepsilon}$: strong force property and Palais--Smale condition}\label{section-4.2}

In what follows, we consider a potential $V \colon \mathbb{R}^{N}\setminus\{0\} \to \mathbb{R}$ of class $\mathcal{C}^2$ 
satisfying assumptions \ref{hp-V1}, \ref{hp-V2}, and \ref{hp-V3}; moreover, let $h\in(-mc^{2},0)$. 
Given $\varepsilon\in(0,+\infty)$, we define a penalized functional $\mathcal{M}_{\varepsilon} \colon \Lambda  \to \mathbb{R}$ by setting
\begin{equation}\label{def-M-eps-Z}
\mathcal{M}_{\varepsilon}(u) 
= \int_{0}^{1} |\dot{u}(t)|^{2}\,\mathrm{d}t \, \int_{0}^{1} \left(Z_{h}(u(t))+\dfrac{h}{c^{2}}(h+2mc^{2})\right)\,\mathrm{d}t + \varepsilon \int_{0}^{1} \dfrac{1}{|u(t)|^{2}}\,\mathrm{d}t,
\end{equation} 
where $\Lambda \subseteq H$ is as in \eqref{def-Lambda} and $Z_h \colon \mathbb{R}^{N}\setminus\{0\} \to \mathbb{R}$ is given by \eqref{def-Zh}.
Recalling \eqref{eq-Z-h}, we thus have
\begin{equation}\label{def-M-eps-V}
\mathcal{M}_{\varepsilon}(u) 
= \dfrac{1}{c^{2}}\int_{0}^{1} |\dot{u}|^{2} \int_{0}^{1} \Bigl[V(u)^{2} + 2(h+mc^{2}) V(u) + h(h+2mc^{2})\Bigr] + \varepsilon \int_{0}^{1} \dfrac{1}{|u|^{2}}.
\end{equation} 

\begin{remark}\label{prop-Z-h}
For future convenience, let us also observe that
\begin{equation}\label{eq-Mprimo}
\begin{aligned}
\mathcal{M}_{\varepsilon}'(u) v 
&= 
2\int_{0}^{1} \langle\dot{u},\dot{v}\rangle \int_{0}^{1} \left(Z_{h}(u)+\dfrac{h}{c^{2}}(h+2mc^{2})\right)
\\
&\quad+ \int_{0}^{1} |\dot{u}|^{2} \int_{0}^{1} \langle \nabla Z_{h}(u), v \rangle
- 2\varepsilon \int_{0}^{1} \dfrac{\langle u, v \rangle}{|u|^{4}},
\end{aligned}
\end{equation}
and so
\begin{equation}\label{eq-Mprimo-u}
\mathcal{M}_{\varepsilon}'(u) u
= 
\int_{0}^{1} |\dot{u}|^{2} \int_{0}^{1} \biggl( 2\left(Z_{h}(u)+\dfrac{h}{c^{2}}(h+2mc^{2})\right)+  \langle \nabla Z_{h}(u), u \rangle \biggr)
- 2\varepsilon \int_{0}^{1} \dfrac{1}{|u|^{2}}.
\end{equation}
Now
\begin{equation}\label{eq-nablaZh}
\nabla Z_{h}(u) = \dfrac{2}{c^{2}} (V(u)+h+mc^{2}) \nabla V(u),
\end{equation}
and so from assumption \ref{hp-V2} we easily get
\begin{equation*}
\langle \nabla Z_{h}(u), u \rangle + \lambda Z_{h}(u) \leq 0, \quad \text{for every $u\in\mathbb{R}^{N}\setminus\{0\}$,}
\end{equation*} 
with the same $\lambda>0$. Inserting this inequality into \eqref{eq-Mprimo-u} we finally obtain
\begin{equation}\label{est-Mprimo}
\mathcal{M}_{\varepsilon}'(u) u\leq \int_{0}^{1} |\dot{u}|^{2} \int_{0}^{1} \biggl( (2-\lambda)Z_{h}(u)+\dfrac{2h}{c^{2}}(h+2mc^{2}) \biggr)- 2\varepsilon \int_{0}^{1} \dfrac{1}{|u|^{2}}.
\end{equation}
This inequality will be used several times throughout the remainder of the paper.
\hfill$\lhd$
\end{remark}

We begin by establishing the following strong force type property for the functional $\mathcal{M}_{\varepsilon}$.

\begin{proposition}\label{prop-4.x}
Let $(u_{n})_{n}\subseteq \Lambda$ and $u\in\partial\Lambda$ be such that $u_{n}$ converges to $u$ weakly in $H^{1}$. Then $\mathcal{M}_{\varepsilon}(u_{n})\to+\infty$. 
\end{proposition}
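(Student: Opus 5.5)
Since $u_n \rightharpoonup u$ weakly in $H^1$, we have $u_n \to u$ uniformly on $[0,1]$ and $\int_0^1 |\dot u_n|^2$ is bounded. The hypothesis $u \in \partial\Lambda$ then means that $u$ has a zero. In the planar case this uses the fact that $\mathcal{O}^2_1$ is open and closed in $\mathcal{O}^2$, so a uniform limit of loops with winding number one that avoids the origin still winds once. Hence there is $t_0$ with $u(t_0)=0$. The plan is to show that $\int_0^1 |u_n|^{-2} \to +\infty$, and then to check that the first (product) term of $\mathcal{M}_\varepsilon$ is bounded from below, so that the penalization $\varepsilon\int_0^1 |u_n|^{-2}$ forces divergence.

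\textbf{Divergence of $\int_0^1|u_n|^{-2}$.} This is the classical strong force estimate with exponent $2$. Let $K=\sup_n\|\dot u_n\|_{L^2}$ and $\delta_n = |u_n(t_0)| \to 0$. For $|t-t_0|\le \tau$ we have
\[
|u_n(t)| \le \delta_n + K|t-t_0|^{1/2},
\]
so
\[
\int_0^1 \frac{\mathrm{d}t}{|u_n(t)|^2} \ge \int_{-\tau}^{\tau} \frac{\mathrm{d}s}{(\delta_n + K|s|^{1/2})^2}.
\]
The right-hand side tends to $\int_{-\tau}^{\tau} K^{-2}|s|^{-1}\,\mathrm{d}s=+\infty$ by Fatou's lemma (or by monotone convergence). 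This gives the divergence.

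\textbf{Lower bound for the first term.} By \eqref{eq-Z-h}, the integrand equals
\[
\frac{1}{c^2}\Bigl[V^2 + 2(h+mc^2)V + h(h+2mc^2)\Bigr].
\]
By \ref{hp-V2}, $V>0$, and since $h+mc^2>0$ this expression is at least $h(h+2mc^2)/c^2$, a negative constant. Multiplying by the bounded quantity $\int_0^1|\dot u_n|^2$, the first term of $\mathcal{M}_\varepsilon(u_n)$ is bounded below by $-K^2\,|h(h+2mc^2)|/c^2$. Combining with the previous step, $\mathcal{M}_\varepsilon(u_n) \ge -C + \varepsilon\int_0^1|u_n|^{-2} \to +\infty$. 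Note that the argument does not require $u\not\equiv 0$: this is exactly the purpose of the penalization, and it avoids the case distinction of the unpenalized strong force argument.

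\textbf{Main obstacle.} The only delicate point is justifying that $u$ has a zero when $N=2$, that is, that $\partial\mathcal{O}^2_1 \subseteq \{u : u^{-1}(0)\ne\emptyset\}$. I would argue via uniform convergence: if $u$ had no zero, then $\min|u|>0$, so $u_n$ would be uniformly close to $u$ and homotopic to it in $\mathbb{R}^2\setminus\{0\}$. Then $\mathrm{rot}(u)=1$, so $u\in\Lambda$ (which is open), contradicting $u\in\partial\Lambda$.
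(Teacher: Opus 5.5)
Your proof is correct, but in the collision case with a nontrivial limit it takes a different route from the paper.

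\textbf{The paper's route.} The paper splits into two cases.
\begin{itemize}
\item If $u\equiv 0$, it argues exactly as you do: Fatou's lemma on the penalization term, together with the lower bound $\frac{h}{c^2}(h+2mc^2)\int_0^1|\dot u_n|^2$ for the product term.
\item If $u\not\equiv 0$, it picks $t_0$ with $u(t_0)=0$ and $\delta$ with $u(t_0+\delta)\neq 0$. It then uses the logarithmic estimate $\bigl|\log|u_n(t_0+\delta)|-\log|u_n(t_0)|\bigr|\le \|\dot u_n\|_{L^2}\bigl(\int |u_n|^{-2}\bigr)^{1/2}$. Combined with \ref{hp-V3}, this bounds the diverging left-hand side by the Maupertuis quantity $\int_0^1|\dot u_n|^2\int_0^1 V(u_n)^2$.
\end{itemize}
So in the second case the blow-up comes from the unpenalized part. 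The argument works verbatim for $\mathcal{M}_0$ and is independent of $\varepsilon$.

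\textbf{Your route.} You use the H\"older bound $|u_n(t)|\le \delta_n+K|t-t_0|^{1/2}$. This gives $\int_0^1|u_n|^{-2}\to+\infty$ whenever the limit has a zero, whether or not $u\equiv 0$, so the penalization does all the work.
\begin{itemize}
\item Your argument is shorter, needs no case distinction, and does not use \ref{hp-V3}.
\item It also directly yields \eqref{eq-ueps2}, which the paper needs later in Proposition~\ref{prop-alternatives}.
\end{itemize}

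\textbf{What the paper's route buys.} Away from the trivial limit, $\mathcal{M}_0$ itself already has the strong force property \eqref{intro-cond1}. This matches the introduction's point that the penalization is only needed to rule out collapse onto the origin.

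\textbf{The boundary point.} Your justification that $u\in\partial\Lambda$ forces a zero when $N=2$ is fine. The winding number is locally constant under uniform convergence, and $\Lambda$ is open. The paper leaves this step implicit.
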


\begin{proof}
Let $(u_{n})_{n}\subseteq \Lambda$ with $u_{n}\rightharpoonup u \in \partial\Lambda$. 
By the compact embedding of $H$ into $L^{\infty}$, $u_{n}\to u$ in $L^{\infty}$.
We consider two cases: $u\equiv0$ and $u\not\equiv0$.
In the first case, by applying Fatou's lemma, we find
\begin{align*}
+\infty = \varepsilon \int_{0}^{1} \dfrac{1}{|u|^{2}}
&\leq \liminf_{n\to+\infty} \biggl(\varepsilon\int_{0}^{1} \dfrac{1}{|u_{n}|^{2}}\biggr)
\\
&\leq \liminf_{n\to+\infty} \biggl(\mathcal{M}_{\varepsilon}(u_{n}) - \dfrac{h}{c^{2}} (h+2mc^{2}) \int_{0}^{1} |\dot{u}_{n}|^{2} \biggr).
\end{align*} 
Since $\|\dot{u}_{n}\|_{L^{2}}$ is bounded (by the weak convergence), the thesis follows.
In the second case, there exist $t_{0}\in[0,1)$ and $\delta>0$ such that 
\begin{equation*}
u(t_{0})=0, \qquad u(t_{0}+\delta)\neq0, \qquad |u_{n}(t)|\leq\eta, \quad \text{for every $t\in[t_{0},t_{0}+\delta]$,}
\end{equation*}
for $n$ large enough.
Then, by the Cauchy--Schwarz inequality and hypothesis \ref{hp-V3}, we obtain
\begin{align}
&\bigl{|}\log|u_{n}(t_{0}+\delta)|-\log|u_{n}(t_{0})|\bigr{|}
\leq
\int_{t_{0}}^{t_{0}+\delta} \dfrac{|\dot{u}_{n}|}{|u_{n}|}
\\
&\leq \biggl( \int_{t_{0}}^{t_{0}+\delta} |\dot{u}_{n}|^{2} \biggr)^{\!\frac{1}{2}} \biggl( \int_{t_{0}}^{t_{0}+\delta} \dfrac{1}{|u_{n}|^{2}} \biggr)^{\!\frac{1}{2}}
\label{eq--}
\\
&\leq \biggl( \int_{0}^{1} |\dot{u}_{n}|^{2} \biggr)^{\!\frac{1}{2}} \biggl( \int_{0}^{1} \dfrac{V(u_{n})^{2}}{\beta^{2}}\biggr)^{\!\frac{1}{2}}
\\
&\leq \dfrac{1}{\beta} \biggl( c^{2}\mathcal{M}_{\varepsilon}(u_{n})-c^{2} h(h+2mc^{2}) \int_{0}^{1} |\dot{u}_{n}|^{2} \biggr)^{\!\frac{1}{2}},
\end{align}
where in the last inequality we use \eqref{def-M-eps-V} and the fact that $V(u)>0$ for every $u\in\mathbb{R}^{N}\setminus\{0\}$ by \ref{hp-V2}.
Passing to the limit and recalling again that $\|\dot{u}_{n}\|_{L^{2}}$ is bounded, the thesis directly follows also in this second case and the proof is complete.
\end{proof}

In the next proposition, we prove that the functional $\mathcal{M}_{\varepsilon}$ satisfies the Palais--Smale condition at any positive level.

\begin{proposition}\label{prop-Palais-Smale}
Let $b>0$ and $(u_{n})_{n}\subseteq\Lambda$ be such that $\mathcal{M}_{\varepsilon}(u_{n})\to b$ and $\mathcal{M}'_{\varepsilon}(u_{n})\to0$. Then $(u_{n})_n$ possesses a subsequence converging to some $u\in\Lambda$ strongly in $H^{1}$.
\end{proposition}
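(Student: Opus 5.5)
The plan follows the usual scheme: first show that the sequence is bounded in $H^{1}$, then use Proposition~\ref{prop-4.x} to keep the weak limit inside $\Lambda$, and finally upgrade weak convergence to strong convergence through the equation. Throughout we write
\begin{equation*}
K_n=\int_0^1|\dot u_n|^2, \qquad A_n=\int_0^1\Bigl(Z_h(u_n)+\tfrac{h}{c^2}(h+2mc^2)\Bigr), \qquad P_n=\int_0^1\frac{1}{|u_n|^2},
\end{equation*}
so that $\mathcal M_\varepsilon(u_n)=K_nA_n+\varepsilon P_n$. Note that $Z_h\ge 0$, since $V>0$ and $h+mc^2>0$.

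\emph{Bound on $K_n$.} We combine \eqref{est-Mprimo} with the value of the functional. Writing $d=-h(h+2mc^2)/c^2>0$ and using $K_n\int Z_h(u_n)=\mathcal M_\varepsilon(u_n)-\varepsilon P_n+dK_n$, inequality \eqref{est-Mprimo} gives
\begin{equation*}
\mathcal M'_\varepsilon(u_n)u_n\le (2-\lambda)\bigl(\mathcal M_\varepsilon(u_n)-\varepsilon P_n+dK_n\bigr)-2dK_n-2\varepsilon P_n
=(2-\lambda)\mathcal M_\varepsilon(u_n)-\lambda dK_n-(4-\lambda)\varepsilon P_n .
\end{equation*}
Hence
\begin{equation*}
\lambda dK_n+(4-\lambda)\varepsilon P_n\le (2-\lambda)(b+o(1))+o(1)\|u_n\|_{H^1}.
\end{equation*}
To close this estimate we need control on $\|u_n\|_{H^1}$ in terms of $K_n$, i.e.\ on the mean $[u_n]$. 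The obstacle is that $\mathcal M'_\varepsilon(u_n)u_n$ involves $u_n$ itself rather than $u_n-[u_n]$.

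We first rule out $|[u_n]|\to\infty$. Suppose it happens. The inequality above, together with $\|u_n\|_{H^1}\lesssim |[u_n]|+K_n^{1/2}$, gives $K_n\le C+o(1)\bigl(|[u_n]|+K_n^{1/2}\bigr)$, so $K_n=o(|[u_n]|)+O(1)$. By Sobolev, $|u_n-[u_n]|\le K_n^{1/2}=o(|[u_n]|)$, so $|u_n|\to\infty$ uniformly. Then \eqref{eq-infinity} gives $V(u_n)\to0$ uniformly, hence $A_n\to -d<0$ and $\mathcal M_\varepsilon(u_n)\le -dK_n/2+\varepsilon P_n+o(1)\le o(1)$, contradicting $b>0$.

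To make this rigorous we test $\mathcal M'_\varepsilon(u_n)$ with $v=u_n/\|u_n\|_{H^1}$, so the right-hand side becomes $o(1)\|u_n\|_{H^1}$, and argue by contradiction on the normalized quantities. Bounded means then give $K_n$ bounded and $P_n$ bounded.

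\emph{Weak limit stays in $\Lambda$.} Up to a subsequence, $u_n\rightharpoonup u$ in $H^1$ and $u_n\to u$ uniformly. If $u\in\partial\Lambda$ (a collision, or for $N=2$ a change of winding, which forces a collision anyway), then Proposition~\ref{prop-4.x} gives $\mathcal M_\varepsilon(u_n)\to\infty$, a contradiction. So $u\in\Lambda$, $u$ stays away from $0$, and $Z_h$, $\nabla Z_h$ are uniformly bounded along $u_n$.

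\emph{Strong convergence.} We test \eqref{eq-Mprimo} with $v=u_n-u$. The terms $\int\langle\nabla Z_h(u_n),u_n-u\rangle$ and $\int\langle u_n,u_n-u\rangle/|u_n|^4$ tend to $0$ by uniform convergence, and $K_n$ is bounded. Since $\mathcal M'_\varepsilon(u_n)(u_n-u)\to0$, we obtain
\begin{equation*}
2A_n\int\langle\dot u_n,\dot u_n-\dot u\rangle\to0 .
\end{equation*}
We need $\liminf A_n>0$. This holds because $\varepsilon P_n\to\varepsilon\int|u|^{-2}$, so $K_nA_n\to b-\varepsilon\int|u|^{-2}$. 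If this limit were $\le0$, we would instead exploit it directly. The honest route is to note that $b-\varepsilon\int|u|^{-2}$ could vanish, and in that case use $A_n\to A(u)$ together with $K_n\to K$.

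If $A(u)\ne0$, we get $\int\langle\dot u_n,\dot u_n-\dot u\rangle\to0$. Combined with weak convergence this gives $\|\dot u_n\|_{L^2}\to\|\dot u\|_{L^2}$, hence strong convergence. The degenerate case $A(u)=0$ is the point I expect to be the main obstacle. There I would use the equation: $\mathcal M'_\varepsilon(u_n)\to0$ with $A_n\to0$ forces
\begin{equation*}
K\,\nabla Z_h(u)-2\varepsilon\,\frac{u}{|u|^4}=0 \quad\text{pointwise}.
\end{equation*}
Testing with $u$ and using \ref{hp-V2} ($\langle\nabla Z_h(u),u\rangle\le0$) gives $-2\varepsilon\int|u|^{-2}\ge0$, which is impossible. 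So this case cannot occur, and the proof is complete.
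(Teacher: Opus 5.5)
Your proof is correct and follows essentially the paper's route: an Ambrosetti--Rabinowitz-type use of \eqref{est-Mprimo} together with the decay \eqref{eq-infinity} for the $H^1$ bound, Proposition~\ref{prop-4.x} to keep the weak limit in $\Lambda$, and testing with $u_n-u$ once $\int_0^1\bigl(Z_h(u)+\frac{h}{c^2}(h+2mc^2)\bigr)\neq 0$ is known. The differences are minor. You merge the paper's two cases into the single estimate $\lambda dK_n+(4-\lambda)\varepsilon P_n\le(2-\lambda)\mathcal M_\varepsilon(u_n)-\mathcal M'_\varepsilon(u_n)u_n$, and you exclude $A(u)=0$ through the limiting Euler--Lagrange equation tested on $u$, where the paper uses a direct lower bound; the hesitant passage before your dichotomy can be dropped, because that same estimate already gives $\varepsilon\int_0^1|u|^{-2}\le\frac{2-\lambda}{4-\lambda}\,b$, hence $\lim K_nA_n\ge\frac{2b}{4-\lambda}>0$ and $\liminf A_n>0$.
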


\begin{proof}
Let $(u_{n})_{n}\subseteq\Lambda$ be such that $\mathcal{M}_{\varepsilon}(u_{n})\to b>0$ and $\mathcal{M}'_{\varepsilon}(u_{n})\to0$.
As a consequence, there exist $0<m<M$ such that $\mathcal{M}_{\varepsilon}(u_{n})\in[m,M]$ for all $n$ large enough and moreover $\|\mathcal{M}'_{\varepsilon}(u_{n})\|_{H^{*}}\to0$.

First we claim that $(u_{n})_{n}$ is bounded in $H^{1}$.
We assume by contradiction that, up to subsequences, $\|u_{n}\|_{H^{1}}\to+\infty$ and one of the following two cases holds: 
\begin{itemize}
\item[$(a)$] $|[u_{n}]| \geq 2\|\dot{u}_{n}\|_{L^{2}}$, for every $n$,
\item[$(b)$] $|[u_{n}]| < 2\|\dot{u}_{n}\|_{L^{2}}$, for every $n$.
\end{itemize}
In the case $(a)$, we deduce that $|[u_{n}]| \to+\infty$, since otherwise $\|\dot{u}_{n}\|_{L^{2}}$ and thus $\|u_{n}\|_{L^{2}}$ would be bounded.
By the mean value theorem for integrals applied to the scalar function $|u_{n}|$, let $t_{n}\in[0,1]$ be such that $|u_{n}(t_{n})|=\int_{0}^{1} |u_{n}(t)|\,\mathrm{d}t$. For every $t\in[0,1]$
\begin{align*}
|u_{n}(t)|
&= \Biggl{|}u_{n}(t_{n})+\int_{t_{n}}^{t} \dot{u}_{n}(s)\,\mathrm{d}s \Biggr{|}
\geq |u_{n}(t_{n})| - \Biggl{|}\int_{t_{n}}^{t} \dot{u}_{n}(s)\,\mathrm{d}s\Biggr{|}
\\
&\geq \int_{0}^{1} |u_{n}(t)|\,\mathrm{d}t - \|\dot{u}_{n}\|_{L^{2}}
\geq |[u_{n}]| - \|\dot{u}_{n}\|_{L^{2}}.
\end{align*}
Therefore, we have
\begin{equation*}
\min_{t\in\mathbb{R}} |u_{n}(t)| \geq |[u_{n}]| - \|\dot{u}_{n}\|_{L^{2}} \geq \dfrac{1}{2} |[u_{n}]| \to +\infty.
\end{equation*} 
Since by \eqref{eq-infinity}
\begin{equation*}
\int_{0}^{1} V(u_{n}) \to 0, 
\quad \int_{0}^{1} \dfrac{1}{|u_{n}|^{2}}\to 0, 
\quad \text{as $n\to+\infty$,}
\end{equation*}
we obtain 
\begin{equation*}
\liminf_{n\to+\infty} \mathcal{M}_{\varepsilon}(u_{n})
\leq h(h+2mc^{2}) \liminf_{n\to+\infty} \|\dot{u}_{n}\|_{L^{2}}^{2}
\leq 0,
\end{equation*}
a contradiction with the fact that $\mathcal{M}_{\varepsilon}(u_{n})\in[m,M]$ for all $n$ large.

In the case $(b)$, since $\|u_{n}\|_{H^{1}}$ is equivalent to the norm $|[u_{n}]| + \|\dot{u}_{n}\|_{L^{2}}$, then there is a constant $\nu>0$ such that
\begin{equation*}
\|u_{n}\|_{H^{1}} \leq \nu ( |[u_{n}]| + \|\dot{u}_{n}\|_{L^{2}}) \leq 3 \nu \|\dot{u}_{n}\|_{L^{2}}.
\end{equation*} 
Since $\delta_{n}\coloneqq\|\mathcal{M}_{\varepsilon}'(u_{n})\|_{H^{*}}\to0$, we have
\begin{equation*}
|\mathcal{M}_{\varepsilon}'(u_{n}) u_{n} | 
\leq \delta_{n}\|u_{n}\|_{H^{1}} 
\leq 3\nu \delta_{n}\|\dot{u}_{n}\|_{L^{2}}.
\end{equation*} 
Recalling \eqref{est-Mprimo} with $\lambda\in(0,1)$ (see Remark~\ref{rem-hp-V2}), 
from
$-3\nu\delta_{n} \|\dot{u}_{n}\|_{L^{2}}\leq \mathcal{M}_{\varepsilon}'(u_{n}) u_{n}$
it follows that
\begin{equation*}
-3\nu\delta_{n} \dfrac{1}{\|\dot{u}_{n}\|_{L^{2}}}  - \dfrac{2h}{c^{2}} (h+2mc^{2})\leq
(2-\lambda) \int_{0}^{1} Z_{h}(u_{n}).
\end{equation*}
Therefore, passing to the inferior limit leads to
\begin{equation*}
-\dfrac{2h}{c^{2}}(h+2mc^{2})
\leq
(2-\lambda)\liminf_{n\to+\infty} \int_{0}^{1} Z_{h}(u_{n}).
\end{equation*}
Thus, there exists $\mu\in((2-\lambda)/2,1)$ such that for $n$ large
\begin{equation*}
(2-\lambda)\int_{0}^{1} Z_{h}(u_{n})
\geq
-\mu \dfrac{2h}{c^{2}}(h+2mc^{2}) >0.
\end{equation*}
Therefore
\begin{align*}
\mathcal{M}_{\varepsilon}(u_{n}) 
&\geq
\int_{0}^{1} |\dot{u}_{n}|^{2} 
\int_{0}^{1}\left(Z_{h}(u_n) + \dfrac{h}{c^{2}}(h+2mc^{2}) \right)
\\
&\geq
\dfrac{1}{c^{2}}\int_{0}^{1} |\dot{u}_{n}|^{2} 
\int_{0}^{1} \Bigl[ -\dfrac{2\mu}{2-\lambda} h(h+2mc^{2}) + h(h+2mc^{2}) \Bigr] 
\\
&=\dfrac{-h(h+2mc^{2})}{c^{2}}\left(\dfrac{2\mu}{2-\lambda}-1 \right) \int_{0}^{1} |\dot{u}_{n}|^{2} .
\end{align*}
Since $\mathcal{M}_{\varepsilon}(u_{n})$ is bounded we deduce that $\int_{0}^{1}|\dot{u}_{n}|^{2}$ is bounded as well, so we reach a contradiction with the fact that $\|\dot{u}_{n}\|_{L^{2}}\geq \frac{1}{3\nu} \|u_{n}\|_{H^{1}}\to+\infty$.

Having proved that $(u_{n})_{n}$ is bounded in $H^{1}$, we have that, up to a subsequence, $u_{n} \rightharpoonup u$ weakly in $H^{1}$ and strongly in $L^{\infty}$, for some $u\in\Lambda$.
Hence, to obtain the strong convergence in $H^{1}$, we just need to show that $\|u_{n}\|_{H^{1}}\to\|u\|_{H^{1}}$.

Since $\|\mathcal{M}_{\varepsilon}'(u_{n})\|_{H^{*}}\to0$ we have $\mathcal{M}_{\varepsilon}'(u_{n})u_{n}\to 0$.
Next, again from \eqref{est-Mprimo} with $\lambda\in(0,1)$,
\begin{align*}
0 &< 2\varepsilon \int_{0}^{1} \dfrac{1}{|u|^{2}} 
=
\lim_{n\to+\infty} \left(\mathcal{M}_{\varepsilon}'(u_{n})u_{n}+2\varepsilon  \int_{0}^{1} \dfrac{1}{|u_{n}|^{2}} \right)
\\
&\leq \liminf_{n\to+\infty}\int_{0}^{1} |\dot{u}_n|^{2} \int_{0}^{1} \biggl( (2-\lambda)Z_{h}(u_n)+\dfrac{2h}{c^{2}}(h+2mc^{2}) \biggr)
\\
&= \liminf_{n\to+\infty}\int_{0}^{1} |\dot{u}_n|^{2}  \lim_{n\to+\infty} \int_{0}^{1} \biggl( (2-\lambda)Z_{h}(u_n)+\dfrac{2h}{c^{2}}(h+2mc^{2}) \biggr),
\end{align*}
and so
\begin{equation*}
-\dfrac{2h}{c^{2}}(h+2mc^{2}) 
\leq (2-\lambda)\lim_{n\to+\infty} \int_{0}^{1} Z_{h}(u_{n})
= (2-\lambda) \int_{0}^{1} Z_{h}(u).
\end{equation*}
Thus, as before,
\begin{equation}\label{Zhu-0}
\int_{0}^{1}\left(Z_{h}(u) + \dfrac{h}{c^{2}}(h+2mc^{2}) \right)
\geq\dfrac{-h(h+2mc^{2})}{c^{2}}\left(\dfrac{2}{2-\lambda}-1 \right)>0.
\end{equation}

From \eqref{eq-Mprimo}, we deduce that
\begin{align*}
\mathcal{M}_{\varepsilon}'(u_{n})(u_{n}-u)
&=
2\int_{0}^{1} \langle \dot{u}_{n}, \dot{u}_{n}-\dot{u}\rangle \int_{0}^{1} \left(Z_{h}(u) + \dfrac{h}{c^{2}}(h+2mc^{2}) \right)
\\
&\quad+ \int_{0}^{1} |\dot{u}_{n}|^{2} \int_{0}^{1} \langle \nabla Z_{h}(u_{n}), u_{n}-u \rangle
-2\varepsilon \int_{0}^{1} \dfrac{\langle u_{n}, u_{n}-u \rangle}{|u_{n}|^{4}}.
\end{align*}
Recalling that $\|\mathcal{M}_{\varepsilon}'(u_{n})\|_{H^{*}}\to0$ and since the second and third term tend to zero, we have
\begin{equation*}
\int_{0}^{1} |\dot{u}_{n}|^{2} \int_{0}^{1} \left(Z_{h}(u_n) + \dfrac{h}{c^{2}}(h+2mc^{2}) \right)
\to
\int_{0}^{1} |\dot{u}|^{2} \int_{0}^{1} \left(Z_{h}(u) + \dfrac{h}{c^{2}}(h+2mc^{2}) \right),
\end{equation*}
which, together with \eqref{Zhu-0}, implies that $\|u_{n}\|_{H^{1}}\to\|u\|_{H^{1}}$. 
Then the proof is complete.
\end{proof}

\subsection{The functional $\mathcal{M}_{\varepsilon}$: geometry and existence of critical points}\label{section-4.2b}

Recalling the definition of the class $\Gamma_{\rho_{0},\rho_{1}}$ given in \eqref{def-Gamma2} for $N=2$ and in \eqref{def-GammaN} for $N \geq 3$, we define the min--max level
\begin{equation*}
b_{\varepsilon,\rho_{0},\rho_{1}}
=
\begin{cases}
\, \displaystyle \inf_{\gamma\in\Gamma_{\rho_{0},\rho_{1}}} \max_{\tau\in[0,1]} \mathcal{M}_{\varepsilon}(\gamma(\tau)),
&\text{if $N=2$,}
\\
\, \displaystyle \inf_{\gamma\in\Gamma_{\rho_{0},\rho_{1}}} \max_{(\tau,q)\in[0,1]\times\mathbb{S}^{N-2}} \mathcal{M}_{\varepsilon}(\gamma(\tau,q)),
&\text{if $N\geq3$.}
\end{cases}
\end{equation*} 
Our first result provides a bound from below for such a min--max level. The validity of this estimate relies crucially on assumption \eqref{estim-N2}.

\begin{proposition}\label{prop-bstar}
There exist $\rho^{*}>0$ and
\begin{equation*}
b_{*}>\dfrac{4\pi^{2}\alpha^{2}}{c^{2}}
\end{equation*}
such that for every $\rho_{0}\in(0,\rho^{*})$, $\rho_{1}>\rho^{*}$, and $\varepsilon\in(0,1)$ it holds that 
\begin{equation}\label{bound_below}
b_{\varepsilon,\rho_{0},\rho_{1}} \geq b_{*}.
\end{equation}
\end{proposition}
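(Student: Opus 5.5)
The plan is to combine the intersection properties of Lemma~\ref{lem-intersection-2} (for $N=2$) and Lemma~\ref{lem-intersection-3} (for $N\geq 3$) with a pointwise lower bound on the Maupertuis integrand near the origin that comes from \ref{hp-V3}.

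\textbf{Choice of $\rho^{*}$ and $b_*$.} By \eqref{estim-N2} and the definition of $M_h$ as a supremum over $(0,\eta/2)$, I fix $\rho^{*}\in(0,\eta/2)$ with $p_h(\rho^{*})>4\pi^{2}\alpha^{2}$. I then set $b_*=p_h(\rho^{*})/c^{2}$, possibly after a harmless reduction to compensate for the normalisation constants mentioned below. Given $\gamma\in\Gamma_{\rho_0,\rho_1}$ with $\rho_0<\rho^{*}<\rho_1$, the intersection lemma applied with $\rho=\rho^{*}$ yields a loop $u=\gamma(\tau)$ (respectively $u=\gamma(\tau,q)$) in $\Lambda$ satisfying
\begin{equation*}
\max_t|u(t)-[u]|=\rho^{*}, \qquad |[u]|\leq\rho^{*}.
\end{equation*}
It then suffices to show $\mathcal{M}_\varepsilon(u)\geq b_*$ for every such $u$. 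The penalisation term is nonnegative, so I simply drop it; this makes the bound uniform in $\varepsilon\in(0,1)$.

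\textbf{Estimates on the selected loop.} I use three facts about $u$.
\begin{enumerate}
\item Since $|u(t)|\leq 2\rho^{*}<\eta$, hypothesis \ref{hp-V3} gives $V(u(t))\geq \beta/|u(t)|\geq \beta/(2\rho^{*})$.
\item Since $u$ is periodic and has oscillation $\rho^{*}$ around its mean, going around both arcs of the loop gives $\rho^{*}\leq \frac12\int_0^1|\dot u|\leq \frac12\|\dot u\|_{L^2}$. Hence $X:=\int_0^1|\dot u|^{2}$ is bounded below by a fixed multiple of $(\rho^{*})^{2}$.
\item The function $g(v)=v^{2}+2(h+mc^{2})v+h(h+2mc^{2})$ is increasing for $v>0$. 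By \eqref{def-M-eps-V}, $c^{2}\mathcal{M}_0(u)=X\int_0^1 g(V(u))$.
\end{enumerate}

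\textbf{Case $N\geq 3$.} The pointwise bound in (1) gives $g(V(u))\geq g(\beta/(2\rho^{*}))$, so $c^{2}\mathcal{M}_0(u)\geq X\,g(\beta/(2\rho^{*}))$. I must first make sure $g(\beta/(2\rho^{*}))>0$; shrinking $\rho^{*}$ slightly if necessary, this follows from positivity of the resulting quadratic. Then I use the lower bound on $X$ from (2), and $c^{2}\mathcal{M}_0(u)$ is bounded below by a quadratic in $\rho^{*}$ of the form $h(h+2mc^{2})\rho^{2}+\beta(h+mc^{2})\rho+\beta^{2}/4$. This is exactly $p_h$ for $N\geq3$, up to tracking the constants between $\rho$ and $2\rho$.

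\textbf{Case $N=2$.} Here I split the integrand instead. For the $V^{2}$ term, Proposition~\ref{prop-stima-omotopia} with $k=1$ gives
\begin{equation*}
X\int_0^1 V(u)^{2}\geq \beta^{2}X\int_0^1|u|^{-2}=\beta^{2}J(u)\geq 4\pi^{2}\beta^{2}.
\end{equation*}
The linear and constant terms combine to $X\bigl[2(h+mc^{2})\int_0^1 V(u)+h(h+2mc^{2})\bigr]$. Using $\int_0^1 V(u)\geq\beta/(2\rho^{*})$, this is linear in $X$ with a coefficient that is positive once $\rho^{*}$ is small relative to the vertex of the parabola. It is then minimised at the smallest admissible $X$, which gives $\beta(h+mc^{2})\rho+h(h+2mc^{2})\rho^{2}$ and hence $p_h$ for $N=2$.

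\textbf{Main obstacle.} The delicate step is that $h(h+2mc^{2})<0$, so the kinetic factor $X$ enters with a negative coefficient and cannot be bounded above. The fix I would try first is the monotonicity of $g$ in (3): it ensures that the coefficient multiplying $X$ is positive at the chosen radius, so only the lower bound on $X$ from (2) is ever needed. This is also what forces $\rho^{*}<\eta/2$ and the use of $\sup_{(0,\eta/2)}$ in $M_h$. Matching the precise constants in $p_h$, namely the factor between $\rho$ and $2\rho$ and the $\beta^{2}/4$, requires careful bookkeeping of the oscillation estimate. Any slack there is absorbed by the strict inequality \eqref{estim-N2} and a suitable choice of $\rho^{*}$.
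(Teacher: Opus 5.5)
Your proposal is correct and is essentially the paper's proof: the intersection lemma at radius $\rho^*$, dropping the penalty, $V(u)\geq\beta/|u|\geq\beta/(2\rho^*)$, a lower bound on $\int_0^1|\dot u|^2$ in terms of $(\rho^*)^2$, and then either $J(u)\geq 4\pi^2$ from Proposition~\ref{prop-stima-omotopia} for $N=2$ or $\int_0^1|u|^{-2}\geq 1/(4(\rho^*)^2)$ for $N\geq 3$. One point to tighten is that $\rho^*$ never needs to be shrunk: the coefficient of $\int_0^1|\dot u|^2$ is bounded below by $(p_h(\rho^*)-4\pi^2\beta^2)/(\rho^*)^2$ when $N=2$ and by $p_h(\rho^*)/(\rho^*)^2$ when $N\geq 3$, both automatically positive since $p_h(\rho^*)>4\pi^2\alpha^2\geq 4\pi^2\beta^2$, and this grouping of the linear and negative constant terms is also what makes the paper's displayed chain legitimate.
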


\begin{proof}
Let $N=2$. According to assumption \eqref{estim-N2} we fix $\rho^{*}\in(0,\eta/2)$ such that 
\begin{equation*}
p_{h}(\rho^{*}) = h(h+2mc^{2}) (\rho^{*})^{2} + \beta(h+mc^{2}) \rho^{*} + 4\pi^{2}\beta^{2} > 4\pi^{2} \alpha^{2},
\end{equation*}
where $\eta$ is as in \ref{hp-V3}, and $b_{*}=p_{h}(\rho^{*})/c^{2}$.
Let $\rho_{0}\in(0,\rho^{*})$, $\rho_{1}>\rho^{*}$, and $\gamma\in\Gamma_{\rho_0,\rho_1}$. 
By Lemma~\ref{lem-intersection-2} there exists $\hat{\tau}\in[0,1]$ such that
\begin{equation*}
\max_{t\in[0,1]} |\gamma(\hat{\tau})(t) -  [\gamma(\hat{\tau})] | = \rho^{*}, \qquad \lvert [\gamma(\hat{\tau})] \rvert\leq\rho^{*}.
\end{equation*}
Let $u(t) = \gamma(\hat{\tau})(t)$. Then
\begin{equation}\label{eq-est-rho}
\begin{aligned}
&\|\dot{u}\|_{L^{2}} \geq \|\dot{u}\|_{L^{1}} \geq \max_{t\in[0,1]} \lvert u(t) - [u] \rvert = \rho^{*},
\\
&\|u\|_{L^{\infty}} \leq \lvert [u] \rvert + \max_{t\in[0,1]} \lvert u(t) - [u] \rvert \leq 2 \rho^{*}.
\end{aligned}
\end{equation}
Therefore
\begin{align*}
\max_{\tau\in[0,1]} \mathcal{M}_{\varepsilon}(\gamma(\tau))
&\geq \mathcal{M}_{\varepsilon}(u)
\geq \mathcal{M}_{0}(u) 
\\
&= \dfrac{1}{c^{2}} \int_{0}^{1} |\dot{u}|^{2} \int_{0}^{1} \Bigl[V(u)^{2} + 2(h+mc^{2}) V(u) + h(h+2mc^{2})\Bigr]
\\
&\geq \dfrac{1}{c^{2}} \int_{0}^{1} |\dot{u}|^{2} 
\int_{0}^{1} \left( \dfrac{\beta^{2}}{|u|^{2}}
+ 2(h+mc^{2}) \dfrac{\beta}{|u|}
+ h(h+2mc^{2})\right),
\end{align*}
where the last inequality follows from \ref{hp-V3}. Recalling the definition of $J$ in \eqref{def-J} and using \eqref{eq-est-rho}, we thus find
\begin{align*}
\mathcal{M}_{\varepsilon}(u) 
&\geq \dfrac{\beta^{2}}{c^{2}} J(u) + \dfrac{(\rho^{*})^2}{c^{2}} 2(h+mc^{2})  \dfrac{\beta}{2\rho^{*}} + \dfrac{(\rho^{*})^2}{c^{2}} h(h+2mc^{2})
\\
&\geq \dfrac{4\pi^{2}\beta^{2}}{c^{2}} + \dfrac{\beta(h+mc^{2})}{c^{2}} \rho^{*}+ \dfrac{h(h+2mc^{2})}{c^{2}} (\rho^{*})^2 = b_{*},
\end{align*}
where in the last inequality we use Proposition~\ref{prop-stima-omotopia}. Passing to the inferior limit with respect to $\gamma\in\Gamma_{\rho_{0},\rho_{1}}$ we thus obtain $b_{\varepsilon,\rho_{0},\rho_{1}} \geq b_{*}$, which is the thesis.

In the case $N\geq3$, according to assumption \eqref{estim-N2}, as above, we fix $\rho^{*}\in(0,\eta/2)$ such that $p_{h}(\rho^{*}) > 4\pi^{2} \alpha^{2}$ and $b_{*}=p_{h}(\rho^{*})/c^{2}$.
Using Lemma~\ref{lem-intersection-3} instead of Lemma~\ref{lem-intersection-2}, the proof proceeds in a similar way, with the final crucial difference that the term $J(u)$ is estimated using \eqref{eq-est-rho} as $J(u) \geq 1/4$ leading to $\mathcal{M}_{\varepsilon}(u) \geq p_{h}(\rho^{*})/c^{2} = b_{*}$.
\end{proof}

Having proved the bound from below \eqref{bound_below} for the min--max level 
$b_{\varepsilon,\rho_{0},\rho_{1}}$, the next result provides (for $\rho_0$ small enough and $\rho_1$ large enough) a 
geometry of generalized mountain pass type for the functional $\mathcal{M}_{\varepsilon}$.

\begin{proposition}\label{prop-geometria}
Let $\rho^{*}$ be as in Proposition~\ref{prop-bstar}.
There exist $\rho_{0}^{*}$ and $\rho_{1}^{*}$, with $0< \rho_{0}^{*} < \rho^{*} < \rho_{1}^{*}$, and $\varepsilon^{*}>0$ such that for every $\varepsilon\in(0,\varepsilon^{*})$ and for every $\gamma \in \Gamma_{\rho_{0}^*,\rho_{1}^*}$ it holds that
\begin{equation*}
\begin{cases}
\, \max\bigl\{\mathcal{M}_{\varepsilon}(\gamma(0)), \mathcal{M}_{\varepsilon}(\gamma(1))\bigr\} < b_{*},
&\text{if $N=2$,}\vspace{2pt}
\\
\, \displaystyle \max\biggl\{\max_{q\in\mathbb{S}^{N-2}}\mathcal{M}_{\varepsilon}(\gamma(0,q)), \max_{q\in\mathbb{S}^{N-2}}\mathcal{M}_{\varepsilon}(\gamma(1,q))\biggr\} < b_{*},
&\text{if $N\geq3$.}
\end{cases}
\end{equation*}
\end{proposition}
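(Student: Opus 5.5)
The values on the bases are fixed: $\gamma(0)=\rho_0\sigma$ and $\gamma(1)=\rho_1\sigma$ (or $\rho_i\sigma(q,\cdot)$ for $N\ge3$). The statement therefore reduces to estimating $\mathcal{M}_\varepsilon(\rho\,\sigma(q,\cdot))$ uniformly in $q$ for very small and very large $\rho$, and then choosing $\varepsilon^*$. By \eqref{norm-dot-sigma} we have $\int_0^1|\rho\dot\sigma|^2=4\pi^2\rho^2$, and by \eqref{norm-sigma} we have $|\rho\sigma(q,t)|\ge\rho$. The planar case is the same computation with $|\sigma|\equiv1$. We write $\mathcal{M}_\varepsilon = \mathcal{M}_0 + \varepsilon\int|u|^{-2}$ and use the form \eqref{def-M-eps-V}.

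\emph{Large $\rho_1$.} We have $\int_0^1|\rho\sigma|^{-2}\le\rho^{-2}$. By Remark~\ref{rem-hp-V2}, $V(u)\le C|u|^{-\lambda}$ for $|u|\ge1$, so $V(\rho\sigma(q,t))\le C\rho^{-\lambda}\to0$ uniformly in $(q,t)$. Hence
\[
c^2\mathcal{M}_0(\rho\sigma)=4\pi^2\rho^2\bigl(h(h+2mc^2)+o(1)\bigr)\to-\infty .
\]
Since $\varepsilon<1$ and $\varepsilon\int|\rho\sigma|^{-2}\le\rho^{-2}$, we get $\mathcal{M}_\varepsilon(\rho_1\sigma)<b_*$ for all $\rho_1$ large, uniformly in $\varepsilon\in(0,1)$ and $q$. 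We fix such a $\rho_1^*>\rho^*$.

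\emph{Small $\rho_0$.} This is the main step. By \ref{hp-V1}, $V(u)=\frac{\alpha}{|u|}(1+o(1))$ as $u\to0$. Since $|\rho\sigma|\ge\rho$, this holds uniformly in $(q,t)$ as $\rho\to0$, so $V(\rho\sigma)^2=\alpha^2|\rho\sigma|^{-2}(1+o(1))$. The other terms satisfy $\rho^2\cdot V(\rho\sigma)=O(\rho)$ and $\rho^2\cdot h(h+2mc^2)=O(\rho^2)$, so they vanish in the limit. Therefore
\[
c^2\mathcal{M}_0(\rho\sigma)=\alpha^2 J(\sigma(q,\cdot))(1+o(1))+O(\rho),
\]
and by scale invariance $J(\sigma(q,\cdot))=4\pi^2\int_0^1|\sigma(q,t)|^{-2}\,\mathrm{d}t\le4\pi^2$, with equality only at $q_1=-1$. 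Hence $\limsup_{\rho\to0}\max_q\mathcal{M}_0(\rho\sigma(q,\cdot))\le4\pi^2\alpha^2/c^2<b_*$, using the strict inequality in Proposition~\ref{prop-bstar}. We fix $\rho_0^*<\rho^*$ with
\[
\max_q\mathcal{M}_0(\rho_0^*\sigma(q,\cdot))<b_* .
\]
This works because the leading Kepler term produces exactly the critical level $4\pi^2\alpha^2/c^2$, which $b_*$ strictly exceeds. The uniformity in $q$ comes from compactness of $\mathbb{S}^{N-2}$ and the bound $|\sigma|\ge1$.

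\emph{Choice of $\varepsilon^*$.} The penalization term satisfies $\varepsilon\int|\rho_0^*\sigma|^{-2}\le\varepsilon(\rho_0^*)^{-2}$. We choose $\varepsilon^*\le1$ so small that
\[
\max_q\mathcal{M}_0(\rho_0^*\sigma(q,\cdot))+\varepsilon^*(\rho_0^*)^{-2}<b_* .
\]
Then both base values are below $b_*$ for every $\varepsilon\in(0,\varepsilon^*)$, and every $\gamma\in\Gamma_{\rho_0^*,\rho_1^*}$ takes only these values on the bases.
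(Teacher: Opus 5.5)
Your proof is correct and follows essentially the same route as the paper. You reduce to the fixed base values, make $\mathcal{M}_0(\rho_1^*\sigma(q,\cdot))$ negative using $V\to0$ at infinity, use the Kepler asymptotics of \ref{hp-V1} to get $\limsup_{\rho\to0}\max_q\mathcal{M}_0(\rho\sigma(q,\cdot))\le4\pi^2\alpha^2/c^2<b_*$, and then shrink $\varepsilon^*$ to absorb the penalization. One small slip: the uniformity of $V(u)=\frac{\alpha}{|u|}(1+o(1))$ along $\rho\sigma$ as $\rho\to0$ comes from the upper bound $|\sigma(q,t)|\le5$ (from \eqref{norm-sigma}, or compactness), which gives $\max_{q,t}|\rho\sigma(q,t)|\to0$; the lower bound $|\rho\sigma|\ge\rho$ is what you actually need for the $O(\rho)$ estimate of the cross term.
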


\begin{proof}
In more concrete terms, we aim to prove that there are $\rho_{0}^{*}$ and $\rho_{1}^{*}$, with $0< \rho_{0}^{*} < \rho^{*} < \rho_{1}^{*}$, and $\varepsilon^{*}>0$ such that for every $\varepsilon\in(0,\varepsilon^{*})$
\begin{equation*}
\begin{cases}
\, \max\bigl\{\mathcal{M}_{\varepsilon}(\rho_{0}^{*}\sigma), \mathcal{M}_{\varepsilon}(\rho_{1}^{*}\sigma)\bigr\} < b_{*},
&\text{if $N=2$,}\vspace{2pt}
\\
\, \displaystyle \max\biggl\{\max_{q\in\mathbb{S}^{N-2}}\mathcal{M}_{\varepsilon}(\rho_{0}^{*}\sigma(q,\cdot)), \max_{q\in\mathbb{S}^{N-2}}\mathcal{M}_{\varepsilon}(\rho_{1}^{*}\sigma(q,\cdot))\biggr\} < b_{*},
&\text{if $N\geq3$.}
\end{cases}
\end{equation*}
Let us focus on the more delicate case $N\geq 3$. For $q\in\mathbb{S}^{N-2}$, let us consider
\begin{equation*}
\mathcal{M}_{0}(\rho \sigma(q,\cdot)) 
= \dfrac{\rho^{2}}{c^{2}} \int_{0}^{1} |\dot{\sigma}(q,\cdot)|^{2} \int_{0}^{1} \Bigl[V(\rho \sigma(q,\cdot))^{2} + 2(h+mc^{2}) V(\rho \sigma(q,\cdot)) + h(h+2mc^{2})\Bigr].
\end{equation*} 
Recalling \eqref{norm-sigma}, it holds that 
\begin{equation*}
\lim_{\rho\to+\infty}\min_{t\in[0,1]} \rho |\sigma(q,t)| = +\infty, 
\quad \text{uniformly in $q\in\mathbb{S}^{N-2}$,}
\end{equation*} 
and so, by \eqref{eq-infinity}, there exists $\rho_{1}^{*}>\rho^{*}$ such that
\begin{equation}\label{eq1-prop-geom}
\max_{q\in\mathbb{S}^{N-2}} \mathcal{M}_{0}(\rho_{1}^{*} \sigma(q,\cdot)) < 0.
\end{equation} 
On the other hand, since
\begin{equation*}
\lim_{\rho\to 0^{+}} \rho|\sigma(q,t)| = 0, 
\quad \text{uniformly in $q\in\mathbb{S}^{N-2}$,}
\end{equation*} 
from \ref{hp-V1} we obtain that
\begin{equation*}
\lim_{\rho\to 0^{+}} \mathcal{M}_{0}(\rho \sigma(q,\cdot)) = \dfrac{\alpha^{2}}{c^{2}} \int_{0}^{1} |\dot{\sigma}(q,\cdot)|^{2} \int_{0}^{1} \dfrac{1}{|\sigma(q,\cdot)|^{2}}, 
\quad \text{uniformly in $q\in\mathbb{S}^{N-2}$.}
\end{equation*} 
Taking into account \eqref{norm-sigma} and \eqref{norm-dot-sigma}, we have
\begin{equation*}
\dfrac{\alpha^{2}}{c^{2}} \int_{0}^{1} |\dot{\sigma}(q,\cdot)|^{2} \int_{0}^{1} \dfrac{1}{|\sigma(q,\cdot)|^{2}} \leq \dfrac{4\pi^{2} \alpha^{2}}{c^{2}} < b_{*}, \quad \text{for every $q\in\mathbb{S}^{N-2}$,}
\end{equation*} 
and so there exists $\rho_{0}^{*}\in(0,\rho^{*})$ such that
\begin{equation}\label{eq2-prop-geom}
\max_{q\in\mathbb{S}^{N-2}} \mathcal{M}_{0}(\rho_{0}^{*} \sigma(q,\cdot)) < b_{*}.
\end{equation} 
From \eqref{eq1-prop-geom} and \eqref{eq2-prop-geom}, we can choose $\varepsilon^{*}>0$ such that for every $\varepsilon\in(0,\varepsilon^{*})$ 
\begin{equation*}
\max\biggl\{\max_{q\in\mathbb{S}^{N-2}}\mathcal{M}_{\varepsilon}(\rho_{0}^{*}\sigma(q,\cdot)), \max_{q\in\mathbb{S}^{N-2}}\mathcal{M}_{\varepsilon}(\rho_{1}^{*}\sigma(q,\cdot))\biggr\}  < b_{*}.
\end{equation*}
The case $N=2$ is analogous and actually easier since $\sigma$ does not depend on $q$.
\end{proof}

We are now in a position to conclude. In view of the geometric properties established by Proposition~\ref{prop-bstar} and Proposition~\ref{prop-geometria}, of the strong force type property given by Proposition~\ref{prop-4.x}, and of the fact that the Palais--Smale condition holds at any positive level by Proposition~\ref{prop-Palais-Smale}, standard arguments in critical point theory (cf.~\cite[Proposition~1.5]{Ta-93}) provide, for every $\varepsilon \in (0,\varepsilon^*)$, a critical point $u_\varepsilon$ of the functional $\mathcal{M}_\varepsilon$ satisfying
\begin{equation*}
\mathcal{M}_\varepsilon(u_\varepsilon) = b_{\varepsilon,\rho_{0}^*,\rho_{1}^*}.
\end{equation*}
Note that, by Proposition~\ref{prop-bstar} and setting $b^{*}=b_{\varepsilon^{*},\rho_{0}^*,\rho_{1}^*}$, we have that
\begin{equation*}
b_* \leq \mathcal{M}_{\varepsilon}(u_{\varepsilon}) \leq b^{*}, \quad \text{for every $\varepsilon \in (0,\varepsilon^*)$.}
\end{equation*}
Moreover, arguing again as in the proof of \cite[Proposition~1.5]{Ta-93}, one can assume that, for every $N \geq 2$,
\begin{equation}\label{info-morse}
\mathrm{i}(u_\varepsilon) \leq N-1,
\end{equation}
where by $\mathrm{i}(\cdot)$ we denote the Morse index. This information will actually play a crucial role in the case $N \geq 3$.

\subsection{Limit as $\varepsilon\to0^+$}\label{section-4.3}

The goal of this section is to prove that, for $\varepsilon \to 0^+$, the family $\{u_\varepsilon\}$ converges (possibly passing to a subsequence) to a critical point $u \in \Lambda$ of the functional $\mathcal{M}_0$, with $\mathcal{M}_{0}(u)\geq b_{*} > 0$.
Since it holds that $V(u) + h + 2mc^2 > 0$ by \ref{hp-V2} and $h \in (-mc^2,0)$, 
the Maupertuis principle stated in Theorem~\ref{th-Maupertuis-principle} implies that $u_0$ can be reparameterized as a periodic solution of \eqref{eq-main} with energy $h$, thus concluding the proof of Theorem~\ref{th-allN}.

As a first step, we show the following.

\begin{proposition}\label{prop-alternatives}
There exists $u\in\overline{\Lambda}$ such that, up to a subsequence, $u_{\varepsilon}$ converges to $u$ strongly in $H^{1}$.
Moreover, one and only one of the following alternatives holds: 
\begin{itemize}
\item[$(i)$] $u\equiv0$,
\item[$(ii)$] $u\in\Lambda$ and $u$ is a critical point of $\mathcal{M}_{0}$ with $\mathcal{M}_{0}(u)\geq b_{*}>0$.
\end{itemize}
\end{proposition}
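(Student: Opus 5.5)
The argument has three steps: a uniform $H^1$ bound for $(u_\varepsilon)$, upgrading weak to strong convergence, and the dichotomy for the limit.

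\emph{Step 1: uniform $H^1$ bound.} I would repeat the Palais--Smale argument of Proposition~\ref{prop-Palais-Smale}, now with $\mathcal{M}'_\varepsilon(u_\varepsilon)=0$ exactly and $b_*\le\mathcal{M}_\varepsilon(u_\varepsilon)\le b^*$, and check that no constant depends on $\varepsilon\in(0,\varepsilon^*)$.

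In case $(a)$, $|[u_\varepsilon]|\to\infty$ forces $\min|u_\varepsilon|\to\infty$. Since $\varepsilon<1$, the penalization is at most $\int|u_\varepsilon|^{-2}\to0$, so $\limsup\mathcal{M}_\varepsilon(u_\varepsilon)\le 0<b_*$, a contradiction.

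In case $(b)$, $\mathcal{M}'_\varepsilon(u_\varepsilon)u_\varepsilon=0$ together with \eqref{est-Mprimo} gives
\[
(2-\lambda)\int Z_h(u_\varepsilon)\ge -\tfrac{2h}{c^2}(h+2mc^2),
\]
where the penalization term has the good sign. Hence
\[
\mathcal{M}_\varepsilon(u_\varepsilon)\ge C\|\dot u_\varepsilon\|_{L^2}^2
\]
with $C>0$ independent of $\varepsilon$, and this bounds $\|\dot u_\varepsilon\|_{L^2}$.

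So, up to a subsequence, $u_\varepsilon\rightharpoonup u$ in $H^1$ and $u_\varepsilon\to u$ in $L^\infty$. In particular $u\in\overline{\Lambda}$.

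\emph{Step 2: strong convergence and the identity $\varepsilon\int|u_\varepsilon|^{-2}\to0$.} The same inequality as in Step 1 also gives
\[
\int\Bigl(Z_h(u_\varepsilon)+\tfrac{h}{c^2}(h+2mc^2)\Bigr)\ge \delta:=\frac{-h(h+2mc^2)}{c^2}\,\frac{\lambda}{2-\lambda}>0,
\]
uniformly in $\varepsilon$. Rewriting \eqref{eq-Mprimo-u} gives
\[
2\varepsilon\int|u_\varepsilon|^{-2}=\|\dot u_\varepsilon\|_{L^2}^2\int\Bigl(2\bigl(Z_h+\tfrac{h}{c^2}(h+2mc^2)\bigr)+\langle\nabla Z_h(u_\varepsilon),u_\varepsilon\rangle\Bigr).
\]
By \ref{hp-V1}, the integrand is bounded by $C(1+|u_\varepsilon|^{-2})$. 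This suggests that the penalization is controlled by $\mathcal{M}_\varepsilon$ itself, which is less useful than it looks.

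\emph{Step 3: the dichotomy.} This is where I expect the main difficulty, and I would split according to whether $u\in\Lambda$.

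If $u\in\Lambda$, then $\min|u|>0$, so $|u_\varepsilon|$ is bounded away from $0$ uniformly. Hence $\varepsilon\int|u_\varepsilon|^{-2}\to0$, and $\varepsilon\int\langle u_\varepsilon,v\rangle|u_\varepsilon|^{-4}\to0$ uniformly for $\|v\|\le1$. Testing $\mathcal{M}'_\varepsilon(u_\varepsilon)(u_\varepsilon-u)=0$ exactly as at the end of the proof of Proposition~\ref{prop-Palais-Smale}, and using the uniform positivity $\delta$, yields $\|\dot u_\varepsilon\|_{L^2}\to\|\dot u\|_{L^2}$, hence strong convergence. Passing to the limit in \eqref{eq-Mprimo} then gives $\mathcal{M}_0'(u)=0$. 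Finally,
\[
\mathcal{M}_0(u)=\lim\mathcal{M}_\varepsilon(u_\varepsilon)\ge b_*,
\]
because the penalization vanishes.

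If $u\in\partial\Lambda$ with $u\not\equiv0$, I would argue exactly as in the second case of Proposition~\ref{prop-4.x}. The logarithmic estimate uses only the $\mathcal{M}_0$ part of the functional, so it does not rely on $\varepsilon$. It shows that $\mathcal{M}_0(u_\varepsilon)$ is unbounded, while $\mathcal{M}_0(u_\varepsilon)\le\mathcal{M}_\varepsilon(u_\varepsilon)\le b^*$, a contradiction. This part also covers the case $N=2$, where $\partial\Lambda$ contains loops with winding number different from $1$ only if they pass through $0$.

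If $u\equiv0$, I still need strong convergence $u_\varepsilon\to0$ in $H^1$, which means showing $\|\dot u_\varepsilon\|_{L^2}\to0$. I do not see how to obtain this directly, and here I would rely on the blow-up analysis. The plan is to argue by contradiction: suppose $\|\dot u_\varepsilon\|_{L^2}\ge c_0>0$. Since $u_\varepsilon\to0$ uniformly, \ref{hp-V1} gives $Z_h(u_\varepsilon)\sim\alpha^2/(c^2|u_\varepsilon|^2)$, so the functional is dominated by
\[
\int|\dot u_\varepsilon|^2\int\frac{\alpha^2}{c^2|u_\varepsilon|^2}.
\]
This forces $\int|u_\varepsilon|^{-2}$ to stay bounded, which is incompatible with $u_\varepsilon\to0$ uniformly.

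Exclusivity of the two alternatives is immediate: $0\notin\Lambda$, and $\mathcal{M}_0(0)=0<b_*$.
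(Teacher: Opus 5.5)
Your proposal is correct and follows the paper's proof. The shared steps are: the uniform $H^1$ bound obtained from $\mathcal{M}_\varepsilon'(u_\varepsilon)u_\varepsilon=0$ and \eqref{est-Mprimo}; strong convergence when $u\in\Lambda$, by testing with $u_\varepsilon-u$ and using the uniform positivity of $\int_0^1\bigl(Z_h(u_\varepsilon)+\frac{h}{c^2}(h+2mc^2)\bigr)$; and, when $u\in\partial\Lambda$, the lower bound $Z_h(u_\varepsilon)+\frac{h}{c^2}(h+2mc^2)\geq \frac{\alpha^2}{2c^2|u_\varepsilon|^2}-C$ combined with $\int_0^1|u_\varepsilon|^{-2}\to+\infty$. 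The differences are minor. The paper handles $u\in\partial\Lambda$, $u\not\equiv0$ together with $u\equiv0$: it shows $\|\dot u_\varepsilon\|_{L^2}\to0$, so $u$ is constant and hence zero, whereas you reach a direct contradiction through the logarithmic estimate. Also, your argument for $u\equiv0$ is exactly the paper's direct estimate, so no blow-up analysis is needed at that point.
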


\begin{proof}
We first prove that $\|u_{\varepsilon}\|_{H^{1}}$ is bounded, which implies that, up to subsequences, $u_{\varepsilon}$ converges to $u$ weakly in $H^{1}$, for some $u\in\overline{\Lambda}$.
As a first step, we show that $\|\dot{u}_{\varepsilon}\|_{L^{2}}$ is bounded. Using formula \eqref{est-Mprimo} (with $u_\varepsilon$ instead of $u$ and $\lambda\in(0,1)$) and $\mathcal{M}_{\varepsilon}'(u_{\varepsilon})u_{\varepsilon} =0$, we obtain
\begin{align}
0 < 2\varepsilon \int_{0}^{1} \dfrac{1}{|u_{\varepsilon}|^{2}} 
\leq
\int_{0}^{1} |\dot{u}_{\varepsilon}|^{2} \int_{0}^{1} \biggl( (2-\lambda)Z_{h}(u_{\varepsilon})+\dfrac{2h}{c^{2}}(h+2mc^{2}) \biggr).
\end{align}
Then
\begin{equation*}
(2-\lambda)\int_{0}^{1} Z_{h}(u_{\varepsilon}) \geq - \dfrac{2h}{c^{2}}(h+2mc^{2}),
\end{equation*}
implying that 
\begin{equation}\label{stima-sotto}
\int_{0}^1 \biggl(Z_{h}(u_{\varepsilon})+\dfrac{h}{c^{2}}(h+2mc^{2}) \biggr) \geq \dfrac{-h(h+2mc^{2})}{c^2} \left(\dfrac{2}{2-\lambda}-1 \right)  \eqqcolon \nu_* > 0.
\end{equation}
Therefore
\begin{equation*}
b^{*} \geq \mathcal{M}_{\varepsilon}(u_{\varepsilon}) \geq \int_{0}^{1} |\dot{u}_{\varepsilon}|^{2} \int_{0}^{1} \biggl(Z_{h}(u_{\varepsilon})+\dfrac{h}{c^{2}}(h+2mc^{2}) \biggr) \geq
\nu_* \int_{0}^{1} |\dot{u}_{\varepsilon}|^{2},
\end{equation*}
finally proving that $\|\dot{u}_{\varepsilon}\|_{L^{2}}$ is bounded. As a second step, we show that $\|u_{\varepsilon}\|_{H^{1}}$ is bounded as well. Recalling that the usual $H^1$-norm is equivalent to $|[u_{\varepsilon}]| + \|\dot{u}_{\varepsilon}\|_{L^{2}}$, 
we assume by contradiction that, up to a subsequence, $|[u_{\varepsilon}]|\to+\infty$. Then, since $\|\dot{u}_{\varepsilon}\|_{L^{2}}$ is bounded, we have $\min_{t} |u_{\varepsilon}(t)| \to +\infty$ and thus, by \eqref{eq-infinity},
\begin{equation*}
\limsup_{\varepsilon\to0^{+}} \mathcal{M}_{\varepsilon}(u_{\varepsilon}) \leq 0,
\end{equation*}
which is a contradiction.

In the rest of the proof we show that the convergence of $u_{\varepsilon}$ to $u$ is actually strong and that 
either $(i)$ or $(ii)$ holds true. 

Let us first assume that $u\in\partial\Lambda$. 
Preliminarily, we observe that
\begin{equation}\label{eq-ueps2}
\lim_{\varepsilon\to0^{+}}\int_{0}^{1} \dfrac{1}{|u_{\varepsilon}|^{2}} = +\infty.
\end{equation}
Indeed, if $u\equiv0$ this follows from Fatou's Lemma; otherwise one can argue as in the proof of Proposition~\ref{prop-4.x} (cf.~\eqref{eq--}).
Next, since the weak $H^1$-convergence implies that $\{u_\varepsilon\}$ is bounded in $L^\infty$, from \ref{hp-V1} and \ref{hp-V2} we have, for a suitable constant $C > 0$,
\begin{equation*}
Z_{h}(u_{\varepsilon})+\dfrac{h}{c^{2}}(h+2mc^{2})  \geq \frac{1}{c^2} \Bigl[V(u_\varepsilon)^2 + h(h+2mc^2) \Bigr] \geq \dfrac{\alpha^2}{2c^2|u_\varepsilon|^{2}}-C.
\end{equation*}
Therefore
\begin{equation*}
b^{*} \geq \mathcal{M}_{\varepsilon}(u_{\varepsilon}) \geq \int_{0}^{1} |\dot{u}_{\varepsilon}|^{2} \int_{0}^{1} \left(\dfrac{\alpha^2}{2c^2|u_{\varepsilon}|^{2}}-C \right)
\end{equation*}
and so from \eqref{eq-ueps2} we obtain that $\|\dot{u}_{\varepsilon}\|_{L^{2}}\to 0$, $u$ is constant with $u\equiv0$, and $u_{\varepsilon}\to 0$ strongly in $H^{1}$. In particular, alternative $(i)$ holds.

Finally, assume that $u \in \Lambda$. In this case, from \eqref{eq-Mprimo},
\begin{align*}
0 &= \mathcal{M}_{\varepsilon}'(u_{\varepsilon})(u_{\varepsilon}-u)
=
2\int_{0}^{1} \langle \dot{u}_{\varepsilon}, \dot{u}_{\varepsilon}-\dot{u}\rangle \int_{0}^{1} \biggl(Z_{h}(u_{\varepsilon})+\dfrac{h}{c^{2}}(h+2mc^{2}) \biggr)
\\
&\hspace{95pt}+ \int_{0}^{1} |\dot{u}_{\varepsilon}|^{2} \int_{0}^{1} \langle \nabla Z_{h}(u_{\varepsilon}), u_{\varepsilon}-u \rangle
-2\varepsilon \int_{0}^{1} \dfrac{\langle u_{\varepsilon}, u_{\varepsilon}-u \rangle}{|u_{\varepsilon}|^{4}}.
\end{align*}
Since the second and third term tend to zero by weak convergence, we obtain
\begin{equation*}
\int_{0}^{1} |\dot{u}_{\varepsilon}|^{2} \int_{0}^{1} \biggl(Z_{h}(u_{\varepsilon})+\dfrac{h}{c^{2}}(h+2mc^{2}) \biggr)
\to
\int_{0}^{1} |\dot{u}|^{2} \int_{0}^{1} \biggl(Z_{h}(u)+\dfrac{h}{c^{2}}(h+2mc^{2}) \biggr),
\end{equation*}
which, together with \eqref{stima-sotto}, implies that $\|u_{n}\|_{H^{1}}\to\|u\|_{H^{1}}$ and so the convergence is strong.
From this, the fact that $u$ is a critical point of $\mathcal{M}_0$ with $\mathcal{M}_{0}(u)\geq b_{*}$ follows in a straightforward manner and hence alternative $(ii)$ holds. 
\end{proof}

According to Proposition~\ref{prop-alternatives}, the rest of the section will consist in showing that alternative $(i)$ cannot hold.

Preliminarily to our next arguments, recalling \eqref{def-Zh}, \eqref{eq-nablaZh}, and
\begin{align*}
D^2 Z_h(u)
&=
- \frac{2\alpha^2}{c^2 |u|^4} \mathrm{Id} 
+ \frac{8\alpha^2}{c^2 |u|^6} u \otimes u
- \frac{2\alpha}{c^2 |u|^3} W(u)\mathrm{Id} 
+ \frac{6\alpha}{c^2 |u|^5} W(u) u \otimes u
\\
&\quad
- \frac{4\alpha}{c^2 |u|^3} u \otimes \nabla W(u) 
+ \frac{2\alpha}{c^2 |u|} D^{2} W(u)
+ \frac{2}{c^2} \nabla W(u) \otimes \nabla W(u)
\\
&\quad
+ \frac{2}{c^2}  W(u) D^{2} W(u)
- \frac{2\alpha(h+mc^{2})}{c^2 |u|^{3}} \mathrm{Id} 
+ \frac{6\alpha(h+mc^{2})}{c^2 |u|^{5}} u \otimes u
\\
&\quad
+ \frac{2(h+mc^{2})}{c^2} D^{2} W(u),
\end{align*}
we observe that from assumption \ref{hp-V1} we have the following asymptotic expansions at the origin for the potential $Z_h$ and its first and second derivatives:
\begin{align}\label{eq-1}
Z_h(u) &= \frac{\alpha^2}{c^2 \vert u \vert^2} + o \left( \frac{1}{\vert u \vert^2}\right), \quad \text{as $|u|\to 0$,}
\\
\label{eq-2}
\nabla Z_h(u) &= -\frac{2\alpha^2}{c^2}\frac{u}{\vert u \vert^4} + o \left( \frac{1}{\vert u \vert^3}\right), \quad \text{as $|u|\to 0$,}
\\
\label{eq-3}
D^2 Z_h(u) &= \frac{\alpha^2}{c^2} \left( - \frac{2}{\vert u \vert^4} \mathrm{Id} + \frac{8}{\vert u \vert^6} u \otimes u \right)  + o \left( \frac{1}{\vert u \vert^4}\right), \quad \text{as $|u|\to 0$.}
\end{align}

\begin{proof}[Proof of Theorem~\ref{th-allN}]
We argue by contradiction and assume that alternative $(i)$ of Proposition~\ref{prop-alternatives} holds.
Let us define 
\begin{equation*}
w_{\varepsilon} = \dfrac{u_{\varepsilon}}{\mu_{\varepsilon}},
\qquad \text{with $\mu_{\varepsilon}=\|\dot u_{\varepsilon}\|_{L^{2}}$.}
\end{equation*}
Notice that the definition is well-posed, since $\mu_{\varepsilon}\neq0$. Otherwise, if $\|\dot u_{\varepsilon}\|_{L^{2}}=0$ from \eqref{eq-Mprimo-u} we deduce the contradiction
\begin{equation*}
0 = \mathcal{M}_{\varepsilon}'(u_{\varepsilon}) u_{\varepsilon} = - \dfrac{2\varepsilon}{|u_{\varepsilon}|^{2}}>0.
\end{equation*}
Moreover, $\mu_{\varepsilon}\to0$ as $\varepsilon\to0^{+}$.

\medskip

\noindent
\textbf{Claim~1.} \textit{There exists $w_{0}\in\Lambda$ such that $w_{\varepsilon} \to w_{0}$ weakly in $H^{1}$.}

\smallskip

\noindent
We aim to prove that there exist $c_{1}>0$ and $c_{2}>0$ such that
\begin{equation*}
c_{1} \leq |w_{\varepsilon}(t)| \leq c_{2}, \qquad \text{for every $t\in\mathbb{R}$ and $\varepsilon\in (0,\varepsilon^*)$.}
\end{equation*}
By \eqref{eq-1} we deduce that, for some constant $C_{M}>0$,
\begin{equation*}
Z_{h}(u) \geq \dfrac{\alpha^{2}}{c^{2}|u|^{2}} - C_{M}, \qquad \text{for every $u\in\mathbb{R}^{N}\setminus\{0\}$ with $|u|\leq M$,}
\end{equation*}
and thus
\begin{align*}
b^{*} 
&\geq \mathcal{M}_{\varepsilon}(u_{\varepsilon}) 
\geq \mathcal{M}_{0}(u_{\varepsilon}) 
= \mu_{\varepsilon}^{2} \int_{0}^{1} \left(Z_{h}(\mu_{\varepsilon}w_{\varepsilon}) + \dfrac{h}{c^{2}}(h+2mc^{2}) \right)
\\
&\geq \dfrac{\alpha^{2}}{c^{2}} \int_{0}^{1}\dfrac{1}{|w_{\varepsilon}|^{2}} + \mu_{\varepsilon}^{2} \left(\dfrac{h}{c^{2}}(h+2mc^{2}) -C_{M}\right).
\end{align*}
Therefore,
\begin{equation*}
\left( \int_{0}^{1} \dfrac{1}{|w_{\varepsilon}|^{2}} \right)^{\!\frac{1}{2}} \leq C,
\end{equation*}
for a suitable constant $C>0$. 
A straightforward contradiction argument shows that this inequality implies that there is $C'>0$ such that for each $\varepsilon\in(0,\varepsilon^{*})$ there exists $t_{\varepsilon}\in(0,1)$ such that $|w_{\varepsilon}(t_{\varepsilon})| \geq C'$.
Arguing as in \eqref{eq--}, we have that
\begin{equation*}
\lvert \log|w_{\varepsilon}(t)|-\log|w_{\varepsilon}(t_{\varepsilon})| \rvert \leq \left( \int_{0}^{1} \dfrac{1}{|w_{\varepsilon}|^{2}} \right)^{\!\frac{1}{2}} \leq C,
\end{equation*}
for every $t\in\mathbb{R}$ and $\varepsilon\in(0,\varepsilon^{*})$.
Therefore,
\begin{equation*}
|w_{\varepsilon}(t)| \geq |w_{\varepsilon}(t_{\varepsilon})| e^{-C} \geq C' e^{-C} \eqqcolon c_{1}, \quad \text{for every $t\in[0,1]$.}
\end{equation*}

From \eqref{est-Mprimo} with $\lambda\in(0,1)$, we have
\begin{equation*}
2\varepsilon \int_{0}^{1} \dfrac{1}{|u_{\varepsilon}|^{2}} 
\leq \mu_{\varepsilon}^{2} \int_{0}^{1} \biggl( (2-\lambda)Z_{h}(\mu_{\varepsilon}w_{\varepsilon})+\dfrac{2h}{c^{2}}(h+2mc^{2}) \biggr).
\end{equation*}
Then
\begin{align*}
b_{*} 
&\leq \mathcal{M}_{\varepsilon}(u_{\varepsilon}) 
= \mu_{\varepsilon}^{2} \int_{0}^{1} \left(Z_{h}(\mu_{\varepsilon}w_{\varepsilon}) +\dfrac{2h}{c^{2}}(h+2mc^{2})\right) + \varepsilon \int_{0}^{1} \dfrac{1}{|u_{\varepsilon}|^{2}} 
\\
&\leq \mu_{\varepsilon}^{2} \int_{0}^{1} \left( 2-\dfrac{\lambda}{2} \right) Z_{h}(\mu_{\varepsilon}w_{\varepsilon})+\mu_{\varepsilon}^{2}\dfrac{3h}{c^{2}}(h+2mc^{2}) .
\\
&= \int_{0}^{1} \left( 2-\dfrac{\lambda}{2} \right) \dfrac{\alpha^{2}}{c^{2}|w_{\varepsilon}|^{2}} + o(1),
\end{align*}
where the last equality follows from \eqref{eq-1}.
Since
\begin{equation*}
|w_{\varepsilon}(t)-w_{\varepsilon}(s)| \leq \int_{s}^{t} |\dot{w}_{\varepsilon}| \leq \|\dot{w}_{\varepsilon}\|_{L^{2}} = 1,
\quad \text{for every $s$ and $t$,}
\end{equation*}
then, if $w_{\varepsilon}$ is unbounded, then $\min w_{\varepsilon} \to +\infty$, at least along a subsequence, hence $b_{*}\leq0$, which is a contradiction.

Since $w_{\varepsilon}$ is bounded in $L^{\infty}$, then $w_{\varepsilon}$ is bounded in $L^{2}$ and thus in $H^{1}$. As a consequence, up to a subsequence,
$w_{\varepsilon} \to w_{0}$ weakly in $H^{1}$ for some $w_{0}$ with $c_{1}\leq \|w_{0}\|_{\infty} \leq c_{2}$.

\medskip

\noindent
\textbf{Claim~2.} \textit{It holds that
\begin{equation*}
\lim_{\varepsilon \to 0^+} \dfrac{\varepsilon}{\mu_{\varepsilon}^2} = 0.
\end{equation*}
}

\smallskip

\noindent
From \eqref{eq-Mprimo-u}, \eqref{eq-1}, and \eqref{eq-2}, we deduce that
\begin{align*}
\dfrac{2\varepsilon}{\mu_{\varepsilon}^2 c_{2}^{2}}
\leq
\dfrac{2\varepsilon}{\mu_{\varepsilon}^2}  \int_{0}^{1} \dfrac{1}{|w_{\varepsilon}|^{2}} 
&= 
\mu_{\varepsilon}^2 \int_{0}^{1} \biggl( 2Z_{h}(\mu_{\varepsilon}w_{\varepsilon})+\dfrac{2h}{c^{2}}(h+2mc^{2})+ \langle \nabla Z_{h}(\mu_{\varepsilon}w_{\varepsilon}), \mu_{\varepsilon} w_{\varepsilon} \rangle \biggr)
\\
&= \int_{0}^{1} o\left(\dfrac{1}{|w_{\varepsilon}|^{2}}\right) = o(1),
\end{align*}
for $\varepsilon\to0^{+}$.

\medskip

\noindent
\textbf{Claim~3.} \textit{For $\varepsilon\to0^{+}$, it holds that
\begin{itemize}
\item[$(a)$] $\mathcal{M}_{\varepsilon}(u_{\varepsilon}) - \dfrac{\alpha^{2}}{c^{2}} J(w_{\varepsilon}) \to 0$,
\item[$(b)$] $\mu_{\varepsilon}\mathcal{M}_{\varepsilon}'(u_{\varepsilon}) - \dfrac{\alpha^{2}}{c^{2}} J'(w_{\varepsilon}) \to 0$ (that is, $J'(w_{\varepsilon}) \to 0$),\item[$(c)$] $\mu_{\varepsilon}^{2}\mathcal{M}_{\varepsilon}''(u_{\varepsilon}) - \dfrac{\alpha^{2}}{c^{2}} J''(w_{\varepsilon}) \to 0$.
\end{itemize}
}

\smallskip

\noindent
Concerning point $(a)$, recalling \eqref{def-M-eps-Z} and \eqref{def-J}, we have that
\begin{align*}
&\mathcal{M}_{\varepsilon}(u_{\varepsilon}) - \dfrac{\alpha^{2}}{c^{2}} J(w_{\varepsilon})=
\\
&= \mu_{\varepsilon}^{2} \int_{0}^{1} \left(Z_{h}(\mu_{\varepsilon}w_{\varepsilon}) +\dfrac{2h}{c^{2}}(h+2mc^{2})\right) + \dfrac{\varepsilon}{\mu_{\varepsilon}^{2}} 
 \int_{0}^{1} \dfrac{1}{|w_{\varepsilon}|^{2}} 
- \dfrac{\alpha^{2}}{c^{2}} \int_{0}^{1} \dfrac{1}{|w_{\varepsilon}|^{2}} 
\\
&= \mu_{\varepsilon}^{2} \dfrac{2h}{c^{2}}(h+2mc^{2}) + \dfrac{\varepsilon}{\mu_{\varepsilon}^{2}}  \int_{0}^{1} \dfrac{1}{|w_{\varepsilon}|^{2}} 
+ o(1) = o(1),
\end{align*}
where the last equality follows from \eqref{eq-1} and Claim~2.

For point $(b)$, recalling \eqref{eq-Mprimo}, we observe that
\begin{align*}
&\mu_{\varepsilon}\mathcal{M}_{\varepsilon}'(u_{\varepsilon})v - \dfrac{\alpha^{2}}{c^{2}} J'(w_{\varepsilon})v=
\\
&=
2\mu_{\varepsilon}^{2}\int_{0}^{1} \langle\dot{w}_\varepsilon,\dot{v}\rangle \int_{0}^{1} \left(Z_{h}(\mu_\varepsilon w_\varepsilon)+\dfrac{h}{c^{2}}(h+2mc^{2})\right)
+ \mu_{\varepsilon}^{3} \int_{0}^{1} \langle \nabla Z_{h}(\mu_\varepsilon w_\varepsilon), v \rangle \\
& \quad - 2 \dfrac{\varepsilon}{\mu_{\varepsilon}^{2}}\int_{0}^{1} \dfrac{\langle w_\varepsilon, v \rangle}{|w_\varepsilon|^{4}}
-\dfrac{2\alpha^{2}}{c^{2}}\int_{0}^{1} \langle\dot{w}_\varepsilon,\dot{v}\rangle \int_{0}^{1} \frac{1}{|w_\varepsilon|^2} 
- \dfrac{2\alpha^{2}}{c^{2}}  \int_{0}^{1} \frac{\langle w_\varepsilon, v \rangle}{|w_\varepsilon|^4}
\\
&= 2\dfrac{h}{c^{2}}(h+2mc^{2})\mu_{\varepsilon}^{2}\int_{0}^{1} \langle\dot{w}_\varepsilon,\dot{v}\rangle- 2 \dfrac{\varepsilon}{\mu_{\varepsilon}^{2}}\int_{0}^{1} \dfrac{\langle w_\varepsilon, v \rangle}{|w_\varepsilon|^{4}} +o(1),
\end{align*}
uniformly with respect to $v\in H$ bounded, thanks to \eqref{eq-1} and \eqref{eq-2}.
Analogously, from
\begin{equation*}
\begin{aligned}
\mathcal{M}_{\varepsilon}''(u_\varepsilon)[v,v] 
&= 2\int_{0}^{1} |\dot{v}|^{2} \int_0^1 \left(Z_{h}(u_{\varepsilon}) +\dfrac{2h}{c^{2}}(h+2mc^{2})\right) 
\\
&\quad + 4 \int_{0}^{1} \langle\dot{u}_\varepsilon,\dot{v}\rangle
\int_{0}^{1} \langle \nabla Z_h(u_\varepsilon),v\rangle  +\int_{0}^{1} |\dot{u}_\varepsilon|^{2} \int_{0}^{1} \langle D^2 Z_h(u_\varepsilon)v,v \rangle 
\\
&\quad - 2 \varepsilon \int_{0}^{1} \frac{|v|^2}{|u_\varepsilon|^2} + 8 \varepsilon \int_{0}^{1} \frac{\langle u_\varepsilon,v \rangle^2}{|u_\varepsilon|^6}
\end{aligned}
\end{equation*}
and
\begin{equation*}
\begin{aligned}
J''(w_\varepsilon)[v,v] &= 2\int_{0}^{1} |\dot{v}|^{2} \int_{0}^{1}\frac{1}{|w_\varepsilon|^2} - 4 \int_{0}^{1} \langle\dot{w}_\varepsilon,\dot{v}\rangle \int_{0}^{1} \frac{\langle v_\varepsilon,v\rangle}{|w_\varepsilon|^4} \\
& + 8 \int_{0}^{1}|\dot{w}_\varepsilon|^{2} \int_{0}^{1}\frac{\langle  w_\varepsilon,v\rangle}{|w_\varepsilon|^6} - 2 \int_{0}^{1}|\dot{w}_\varepsilon|^{2}\int_{0}^{1} \frac{|v|^2}{|w_\varepsilon|^4},
\end{aligned}
\end{equation*}
we reach the claim~$(c)$ using \eqref{eq-1}, \eqref{eq-2}, and \eqref{eq-3}.

\medskip

\noindent
\textbf{Claim~4.} \textit{It holds that $w_{\varepsilon} \to w_{0}$ strongly in $H^{1}$.} 

\smallskip

\noindent
By point $(b)$ of Claim~3 and the fact that $w_{\varepsilon}$ converges weakly to $w_0$, we have that
\begin{align*}
0 &= \lim_{\varepsilon\to0^{+}} J'(w_\varepsilon)(w_\varepsilon-w_0)
\\
&= 
\lim_{\varepsilon\to0^{+}} \biggl(
2\int_{0}^{1} \langle\dot{w}_\varepsilon,\dot{w}_\varepsilon-\dot{w}_0\rangle \int_{0}^{1} \frac{1}{|w_\varepsilon|^2} - 2 \int_{0}^{1} |\dot{w}_\varepsilon|^{2} \int_{0}^{1} \frac{\langle w_\varepsilon, w_\varepsilon-w_0 \rangle}{|v_\varepsilon|^4}
\biggr)
\\
&= 
2 \biggl(1-\int_{0}^{1} |\dot{w}_0|^{2} \biggr)\int_{0}^{1} \frac{1}{|w_0|^2}.
\end{align*}
Hence $\|\dot{w}_\varepsilon\|_{L^{2}} \to \|\dot{w}_0\|_{L^{2}} = 1$, from which the strong convergence follows.

\medskip

\noindent
\textbf{Conclusion of the proof.}
By point $(b)$ of Claim~3 and Claim~4, we have that $J'(w_{0})=0$ with $\dot w_0 \not\equiv 0$. Hence by Proposition~\ref{prop-J-2}, $w_{0}=\zeta_{k,r}$ for some $k \neq 0$ and $r>0$.
From $(a)$, $\mathcal{M}_{\varepsilon}(u_{\varepsilon})\to {\alpha^{2}} J(\zeta_{k,r})/{c^{2}} = {4\pi^{2}k^{2} \alpha^{2}}/{c^{2}} $.

In the case $N=2$, since $\mathrm{rot}(u_{\varepsilon})=1$ by definition of $\Lambda$, $\mathrm{rot}(w_{\varepsilon})=\mathrm{rot}(w_{0})=1$. Hence $w_{0}=\zeta_{k,r}$ with $k=1$ and $\mathcal{M}_{\varepsilon}(u_{\varepsilon})\to {4\pi^2\alpha^{2}}/{c^{2}} $. This contradicts the fact that $\mathcal{M}_{\varepsilon}(u_{\varepsilon})\geq b_{*} > {4\pi^2\alpha^{2}}/{c^{2}}$, by Proposition~\ref{prop-bstar}.

In the case $N\geq3$, by point $(c)$ of Claim~3, we have that the Morse index $\mathrm{i}(u_{\varepsilon})$ as a critical point of $\mathcal{M}_\varepsilon$ is greater than or equal to the Morse index of $\zeta_{k,r}$ as a critical point of $J$. Then, by Proposition~\ref{prop-J-2} we have 
\begin{equation*}
\mathrm{i}(u_{\varepsilon})\geq (N-2)(2|k|-1).
\end{equation*}
Combining with \eqref{info-morse}, yields $|k|=1$ and so $\mathcal{M}_{\varepsilon}(u_{\varepsilon})\to {4\pi^2\alpha^{2}}/{c^{2}}$, giving a contradiction as before with Proposition~\ref{prop-bstar}.
The proof is complete.
\end{proof}

\section*{Acknowledgements}
The authors would like to thank Prof.~Walter Dambrosio for helpful discussions on the subject.

%\section*{Declarations}

%\subsubsection*{Conflict of interest} The authors declare that they have no conflict of interest.

%\subsubsection*{Data availability} 
%Data sharing not applicable to this article as no datasets were generated or analyzed during the current study.

\bibliographystyle{elsart-num-sort}
\bibliography{BoFePa-biblio}

\end{document}